\let\oldtocsection=\tocsection
\let\oldtocsubsection=\tocsubsection
\let\oldtocsubsubsection=\tocsubsubsection
\renewcommand{\tocsection}[2]{\hspace{0em}\oldtocsection{#1}{#2}\textbf}
\renewcommand{\tocsubsection}[2]{\hspace{1em}\oldtocsubsection{#1}{#2}}
\renewcommand{\tocsubsubsection}[2]{\hspace{2em}\oldtocsubsubsection{#1}{#2}}
\begin{document}

\newtheorem{theorem}{Theorem}
\newtheorem{lemma}[theorem]{Lemma}
\newtheorem{proposition}[theorem]{Proposition}
\newtheorem{corollary}[theorem]{Corollary}
\newtheorem{definition}{Definition}
\theoremstyle{definition}
\newtheorem{notation}[definition]{Notation}
\newtheorem{theodef}[definition]{Theorem and Definition}
\newtheorem{propdef}[definition]{Proposition and Definition}
\newtheorem{remark}{Remark}
\newtheorem{remarks}[remark]{Remarks}
\newtheorem{example}{Example}
\theoremstyle{definition}
\newtheorem{application}{Application}
\numberwithin{equation}{section}

\newenvironment{pr oof}[1][D\'emonstration]{\begin{trivlist}
\item[\hskip \labelsep {\bfseries #1}]}{\end{trivlist}}

\renewcommand{\div}{\operatorname{div}\!}
\renewcommand{\P}{\mathbb{P}} 
\newcommand{\E}{\mathbb{E}} 
\newcommand{\V}{\mathbb{V}} 
\newcommand{\Cor}{\operatorname{Cor}} 
\newcommand{\spn}{\operatorname{span}}

\title{Shape optimization for quadratic functionals and states with random right-hand sides}
\author{
M. Dambrine\textsuperscript{1},
C. Dapogny\textsuperscript{2},
H. Harbrecht\textsuperscript{3}.
}

\maketitle

\begin{center}
\emph{\textsuperscript{1} Departement of Mathematics, 
Universit\'{e} de Pau et des Pays de l'Adour, 
Avenue de l'Universit\'e, BP 1155,
64013 Pau Cedex, France. \\
\textsuperscript{2} Laboratoire Jean Kuntzmann, 
CNRS, Universit\'e Joseph Fourier, Grenoble INP, 
Universit\'e Pierre Mend\`es France, BP 53, 38041 Grenoble Cedex 9, France.\\ 
\textsuperscript{3} Departement Mathematik und Informatik, Universit\"at Basel, 
Spiegelgasse 1, 4051 Basel, Switzerland.\\
}
\end{center}

\begin{abstract}
In this work, we investigate a particular class of shape optimization problems under uncertainties on the input parameters. 
More precisely, we are interested in the minimization of the expectation of a quadratic
objective in a situation where the state function depends linearly on a random input parameter. This framework
covers important objectives such as tracking-type functionals for elliptic second order 
partial differential equations and the compliance in linear elasticity. We show that 
the robust objective and its gradient are completely and explicitly determined by low-order moments 
of the random input. We then derive a cheap, deterministic algorithm to minimize this 
objective and present model cases in structural optimization.
\end{abstract}\par
\medskip
\medskip
\medskip


\hrule
\tableofcontents
\hrule


\section{Introduction}
Over the last decades, shape optimization has been developed as
an efficient method for designing devices which are optimized with 
respect to a given purpose. Many practical problems in engineering 
lead to boundary value problems for an unknown function which 
needs to be computed to obtain a real quantity of interest. For 
example, in structural mechanics, the equations of linear elasticity 
are usually considered and solved to compute e.g.\ the leading mode 
of a structure or its compliance. Shape optimization is then applied 
to optimize the workpiece under consideration with respect to this objective 
functional. We refer the reader to \cite{conception,Henrot,MS,PIR,SOZ} 
and the references therein for an overview on the topic of shape optimization, 
which falls into the general setting of optimal control of partial differential equations.

Usually, the input parameters of the model, like the applied loads, 
the material's properties (typically the value of the Young modulus or 
Poisson ratio) or the geometry of the involved shapes itself are assumed 
to be perfectly known. However, convenient for the analysis of shape optimization problems, this assumption is
unrealistic with respect to applications. In practice, a manufactured device 
achieves its nominal geometry only up to a tolerance, the material parameters 
never match the requirements perfectly and applied forces can only be estimated. 
Therefore, shape optimization under uncertainty is of great practical importance but started only recently to be 
investigated, see e.g.~\cite{AD,CC,CCL,CHPRS,DK,LSS,SSS} for related results. 

Two approaches are available in the context of optimization under uncertainty, 
depending on the actual knowledge of the uncertain parameters. On the 
one hand, if no a priori information is available short of an upper bound on their magnitude, 
one usually considers a worst-case approach. On the other hand, if some statistical information on the distribution 
of the unknown parameters is given, one can study the statistics of the objective, which depends 
on the random parameters through the state equation. Notice that in this case 
the state function is a random field and so the objective itself becomes random 
-- the value of the objective depends on the design variables and on the 
random variable. 

One is usually primarily interested in stochastic quantities of the objective such 
as its expectation. When this crude average is not sufficient, one may consider 
a weighted combination of the expectation and the standard deviation 
in order to limit the dispersion of the objective values around its expectation. 
Finally, one sometimes also considers the probability that the objective exceeds 
a given threshold. This last objective usually stands for constraints. 

In the present article, we address the following problem: \emph{given a partial statistical description 
of the random loading, design an efficient algorithm to minimize the expectation of 
the objective.} 

We restrict ourselves to a special class of problems, namely those involving a quadratic shape functional
for the state function which is defined by a state equation with a random right-hand 
side. This in particular means that the random state depends linearly on the random 
input parameter. Our theory covers important shape functionals like the Dirichlet
energy and quadratic tracking-type functionals in the context of the Laplace operator. 
Besides, the compliance functional in linear elasticity belongs to the important members of the class of 
functionals under consideration. 

Our main message is the following: \emph{for objectives of the class under 
consideration, whose expectation is to be minimized, all quantities for performing 
a gradient-based shape optimization algorithm can be expressed deterministically, 
i.e., without any approximation. We only need access to the random parameter's 
first and second moment.} The main object is the two-point correlation function 
$\Cor(u)$ of the state function $u$. It is the solution of a tensor-product type  
boundary value problem with the random right-hand side's two-point correlation
as right hand side. As a consequence, both, the expectation of shape functional,
as well as the related shape gradient can explicitly be determined and efficiently 
be computed just from the knowledge of the random right hand side's expectation 
and two-point correlation function. This fact is of tremendous importance for
applications: it is completely unrealistic to have access to the law of the random 
loadings, whereas the knowledge of its expectation and of its two-point 
correlation function seem to be much more reasonable assumptions. We 
therefore end up with a fully deterministic, efficient algorithm of similar cost 
as for classical shape optimization when no uncertainties are taken into account.

This article is organized as follows. First, we present in Section \ref{section:idea} 
the leading idea to reduce the stochastic problem to a deterministic one, relying on a very simplified model in finite dimension. 
We introduce the tensor formulation that is the keystone of the subsequent calculations. 
Then, in Section \ref{section:setting}, we present the shape calculus which we shall 
use and adapt the idea of Section \ref{section:idea} to this more complex setting. In particular, we recall definitions 
and properties of tensor products on Hilbert spaces. We then apply in Section 
\ref{sec:illustration} the obtained method to three significant examples in the 
context of the Laplace operator and the equations of linear elasticity. Finally, we 
explain in Section \ref{section:implementation} how to design efficient numerical 
methods to solve the corresponding optimization problem. The main point is to 
remark that the numerical resolution of the high-dimensional boundary value 
problem which defines $\Cor(u)$ can be avoided if desired. An appropriate 
low-rank approximation scheme allows to reduce this computation to the 
resolution of some classical boundary values that can be solved thanks to 
a standard toolbox. This leads to a non-intrusive implementation of the 
proposed method. We conclude in Section \ref{section:simulations} 
with numerical examples concerning the robust optimization of the 
compliance of a mechanical structure. 

\section{Formal presentation of the main idea}
\label{section:idea}
In this section, we formally outline the main idea of our approach
in a finite-dimensional setting where calculations can be performed
in an intuitive way by using only elementary algebra. To this end, let 
${\mathcal H}$ be a vector space of designs $h$, whose performances 
are evaluated by a cost function ${\mathcal C}(h,\omega)$ which depends
on $h$ via the solution $u(h,\omega) = (u_i(h,\omega))_{i=1,\ldots,N}$ 
of the $N$-dimensional system
\begin{equation}\label{equh}
 {\mathcal A}(h) u(h,\omega) = f(\omega).
\end{equation}
In this formula, ${\mathcal A}(h) \in \mathbb{R}^{N^2}$ is an invertible 
matrix of dimension $N \times N$, $f(\omega)$ is a (random) vector in 
$\mathbb{R}^N$, and $\omega\in\Omega$ is an event, belonging to a 
complete probability space $(\Omega,\Sigma,\P)$. The cost function 
${\mathcal C}$ is assumed to be \textit{quadratic}, i.e., of the form
$$ 
{\mathcal C}(h,\omega) = \langle {\mathcal B} u(h,\omega), 
u(h,\omega) \rangle = {\mathcal B} : \left( u(h,\omega) \otimes u(h,\omega) \right),
$$
where ${\mathcal B} \in \mathbb{R}^{N^2}$ is independent of the 
design for the sake of simplicity. In this formula, the tensor product 
$v \otimes w$ of two vectors $v, w \in \mathbb{R}^N$ is the 
$(N \times N)$-matrix with entries $ (v \otimes w)_{i,j} = v_i w_j$, 
$i,j=1,\ldots,N$, and $:$ stands for the Frobenius inner product 
over matrices. 

The objective function of interest is the mean value of the cost 
${\mathcal C}(h,\omega)$: 
\begin{equation}\label{eqmh}
{\mathcal M}(h) = \int_\Omega{ {\mathcal C}(h,\omega)\: 
	\mathbb{P}(d\omega)} = {\mathcal B} : \text{\rm Cor}(u)(h).
\end{equation}
Here, $\text{Cor}(u)(h)$ is the $N \times N$ \textit{correlation matrix}%
\footnote{Unfortunately, the literature is not consistent as far as the 
notions of \textit{covariance} and \textit{correlation} are concerned. 
In this article, the correlation between two square-integrable random 
variables $X$ and $Y$ is defined as $\mathbb{E}(XY)$.}
of $u(h,\omega)$, whose entries read
$$
\text{Cor}(u)(h)_{i,j} = \int_\Omega{u_i(h,\omega) 
u_j(h,\omega) \:\mathbb{P}(d\omega)}, \quad i,j=1,\ldots,N.
$$
This matrix can be calculated as the solution to the 
$N^2$-dimensional system
\begin{equation}\label{eqcovu}
 \left( {\mathcal A}(h) \otimes {\mathcal A}(h) \right) 
 	\text{\rm Cor}(u)(h) = \text{\rm Cor}(f).
\end{equation}
At this point, let us recall that ${\mathcal A}(h) \otimes 
{\mathcal A}(h) : \mathbb{R}^{N^2} \rightarrow \mathbb{R}^{N^2}$ 
is the unique linear mapping such that
$$ 
\forall u,v \in \mathbb{R}^N, \:\: ({\mathcal A}(h) \otimes {\mathcal A}(h))
(u \otimes v) = ({\mathcal A}(h) u)\otimes ({\mathcal A}(h) v). 
$$

Let us now calculate the gradient of ${\mathcal M}(h)$. Denoting with 
$^\prime$ the differentiation with respect to $h$, we differentiate 
(\ref{equh},\ref{eqmh}) in an arbitrary direction $\widehat{h}$ to obtain 
\begin{equation}\label{eqmp}
{\mathcal M}^\prime(h)(\widehat{h}) =  2\int_\Omega{\langle {\mathcal B} 
	u^\prime(h,\omega)(\widehat{h}), u(h,\omega) \rangle \:\mathbb{P}(d\omega)}, 
\end{equation}
where
$$
{\mathcal A}(h) u^\prime(h,\omega)(\widehat{h}) 
	= -{\mathcal A}^\prime(h)(\widehat{h})u(h,\omega).
$$
Introducing the adjoint state $p(h,\omega)$, which is the 
solution to the ($N$-dimensional) system 
\begin{equation}\label{eqph}
{\mathcal A}(h)^T p(h,\omega) = - 2{\mathcal B}^Tu(h,\omega), 
\end{equation}
we derive successively
$$
2\langle {\mathcal B} u^\prime(h,\omega)(\widehat{h}), 
	u(h,\omega) \rangle = -\langle u^\prime(h,\omega)(\widehat{h}), 
		{\mathcal A}(h)^T p(h,\omega)  \rangle
  = \langle {\mathcal A}^\prime(h)(\widehat{h})u(h,\omega),  p(h,\omega)  \rangle. 
$$
Hence, we arrive at
$$ 
{\mathcal M}^\prime(h)(\widehat{h})  
	= \left( {\mathcal A}^\prime(h)(\widehat{h}) \otimes I \right) 
		\text{\rm Cor}(u,p)(h).
$$
In this last formula, the correlation matrix $\text{\rm Cor}(u,p)(h)$ 
can be calculated as the solution to an $N^2$-dimensional 
system; indeed, using (\ref{equh},\ref{eqph}), one has for any 
event $\omega$ that
$$ 
\left( {\mathcal A}(h) \otimes {\mathcal A}(h)^T \right) 
(u(h,\omega) \otimes p(h,\omega)) = -\left( {\mathcal A}(h) \otimes {\mathcal B} \right) 
	(u(h,\omega) \otimes u(h,\omega)),
$$
whence the following system for $\text{\rm Cor}(u,p)$:
\begin{equation}\label{eqcovup}
 \left( {\mathcal A}(h) \otimes {\mathcal A}(h)^T \right) \text{\rm Cor}(u,p)(h) 
 = -\left( {\mathcal A}(h) \otimes {\mathcal B} \right) \text{\rm Cor}(u)(h).
 \end{equation}
These considerations show that both, the objective function 
${\mathcal M}(h)$ and its gradient, can be exactly calculated 
from the sole datum of the correlation matrix of $f$ (and not 
of its law!). 

At this point, one may wonder about the practical interest of the foregoing 
computations since the systems (\ref{eqcovu},\ref{eqcovup}) seem difficult 
to solve (see however \cite{ST0}). The main idea consists in performing 
a low-rank approximation of the correlation matrix $\text{\rm Cor}(f)$: 
$$ 
\text{\rm Cor}(f) \approx \sum_{i=1}^m{f_i \otimes f_i}, \quad m \ll N.
$$
Then, formula (\ref{eqcovu}) leads to the calculation of a 
convenient approximation of $\text{\rm Cor}(u)(h)$ in 
accordance with 
$$ 
\text{\rm Cor}(u)(h) \approx \sum_{i=1}^m{u_i (h)\otimes u_i(h)},
$$
where $u_i(h)$ arises as the solution of the system
\begin{equation}\label{equi}
 {\mathcal A}(h) u_i(h) = f_i.
\end{equation}
Similarly, by (\ref{eqcovup}), one has
$$ 
\text{\rm Cor}(u,p)(h) \approx \sum_{i=1}^m{u_i (h)\otimes p_i(h)}
$$
with 
\begin{equation}\label{equiT}
{\mathcal A}(h)^T p_i(h) = -{\mathcal B}^Tu_i(h). 
\end{equation}
Hence, calculating ${\mathcal M}(h)$ and its derivative 
${\mathcal M}^\prime(h)$ amounts to solving (only) $m$ 
systems of the form (\ref{equi}) and $m$ systems
of the form (\ref{equiT}).

\begin{remark} 
\noindent
\begin{itemize}
\item 
Cost functionals ${\mathcal C}$ of the design involving 
a linear term of the form $\ell(u(h,\omega))$ can be considered 
in the same way (see Section \ref{sectracking}). The corresponding 
mean value also involves the mean value of $u$, $\mathbb{E}(u)(h) 
:= \int_\Omega{u(h,\omega)\:\mathbb{P}(d\omega)}$. 
\item 
Formulae (\ref{eqmh},\ref{eqmp}) show explicit expressions 
of ${\mathcal M}(h)$ and ${\mathcal M}^\prime(h)$ only in terms 
of the correlation $\text{\rm Cor}(f)$, which is quite appealing for 
at least two reasons. First, $\text{\rm Cor}(f)$ may be imperfectly 
known (in realistic applications, it is often reconstructed from 
observations by statistical methods). Second, as we have just 
discussed, it is often desirable to approximate it, so to ease 
numerical computations. In either situation, these formulae 
allow to measure directly the impact of an approximation of 
$\text{\rm Cor}(f)$ on ${\mathcal M}(h)$ and ${\mathcal M}^\prime(h)$.
\item 
An alternative approach for the calculation of ${\mathcal M}(h)$ 
and ${\mathcal M}^\prime(h)$ consists in computing a truncated 
Karhunen-Lo\`eve expansion of $f(\omega)$, i.e.,
$ 
f(\omega) \approx \sum_{i=1}^m{f_i \xi_i(\omega)}
$
with $\{f_i\}$ being orthogonal vectors in $\mathbb{R}^N$ 
and $\{\xi_i\}$ being uncorrelated random variables. 
Injecting this expression in (\ref{equh}) yields an 
approximation of $u$ in accordance with
$
u(h,\omega) \approx \sum_{i=1}^m{u_i(h) \xi_i(\omega)},
$
where the $u_i(h)$ are given by (\ref{equi}). Then, using the 
quadratic structure of the cost ${\mathcal C}$ allows to conveniently 
approximate ${\mathcal M}(h)$ and ${\mathcal M}^\prime(h)$,
leading to similar formulae. Doing so is however less efficient 
than the proposed approach for at least two reasons. On the 
one hand, calculating the Karhunen-Lo\`eve expansion of
a random field is rather involved in terms of computational 
cost. In contrast, the proposed method in this article relies on 
\textit{any} low-rank approximation of the correlation 
$\text{\rm Cor}(f)$. On the other hand, estimating the 
error entailed by such a process is awkward, since it does 
not rely on the direct connection between ${\mathcal M}(h)$, 
${\mathcal M}^\prime(h)$ and $\text{\rm Cor}(u)$, $\text{\rm Cor}(u,p)$ 
(it passes through the approximation of $u(h,\omega)$ itself, which 
is not directly involved in their expressions). 
\end{itemize}
\end{remark}

\section{Shape optimization setting}
\label{section:setting}
Extending the framework presented in the previous section 
to the infinite dimensional setting, and more specifically to 
that of shape optimization, demands adequate extensions of 
the notions of random variables, correlation matrices, etc. 
At the centre of these generalizations lies the notion of 
tensor product between Hilbert spaces, about which this 
section gathers some useful material for the reader's 
convenience.\\
\subsection{Differentiation with respect to the domain: 
Hadamard's method}~\\

\noindent Several notions of differentation with respect to the domain 
are available in the literature. In this article, we rely on 
Hadamard's boundary variation method (see e.g.\ 
\cite{conception,Henrot,MS}), which advocates to 
describe variations of a bounded, Lipschitz domain 
$D\subset \mathbb{R}^d$ of the form (see Figure 
\ref{fighadamard}): 
\begin{equation}\label{varhadamard}
 D_\theta = (I+\theta)(D), \:\: \theta \in W^{1,\infty}(\mathbb{R}^d,\mathbb{R}^d), 
 \:\: \lvert\lvert \theta \lvert\lvert_{W^{1,\infty}(\mathbb{R}^d,\mathbb{R}^d)} \leq 1.
 \end{equation} 

\begin{figure}[!ht]
\centering
\includegraphics[width=0.5 \textwidth]{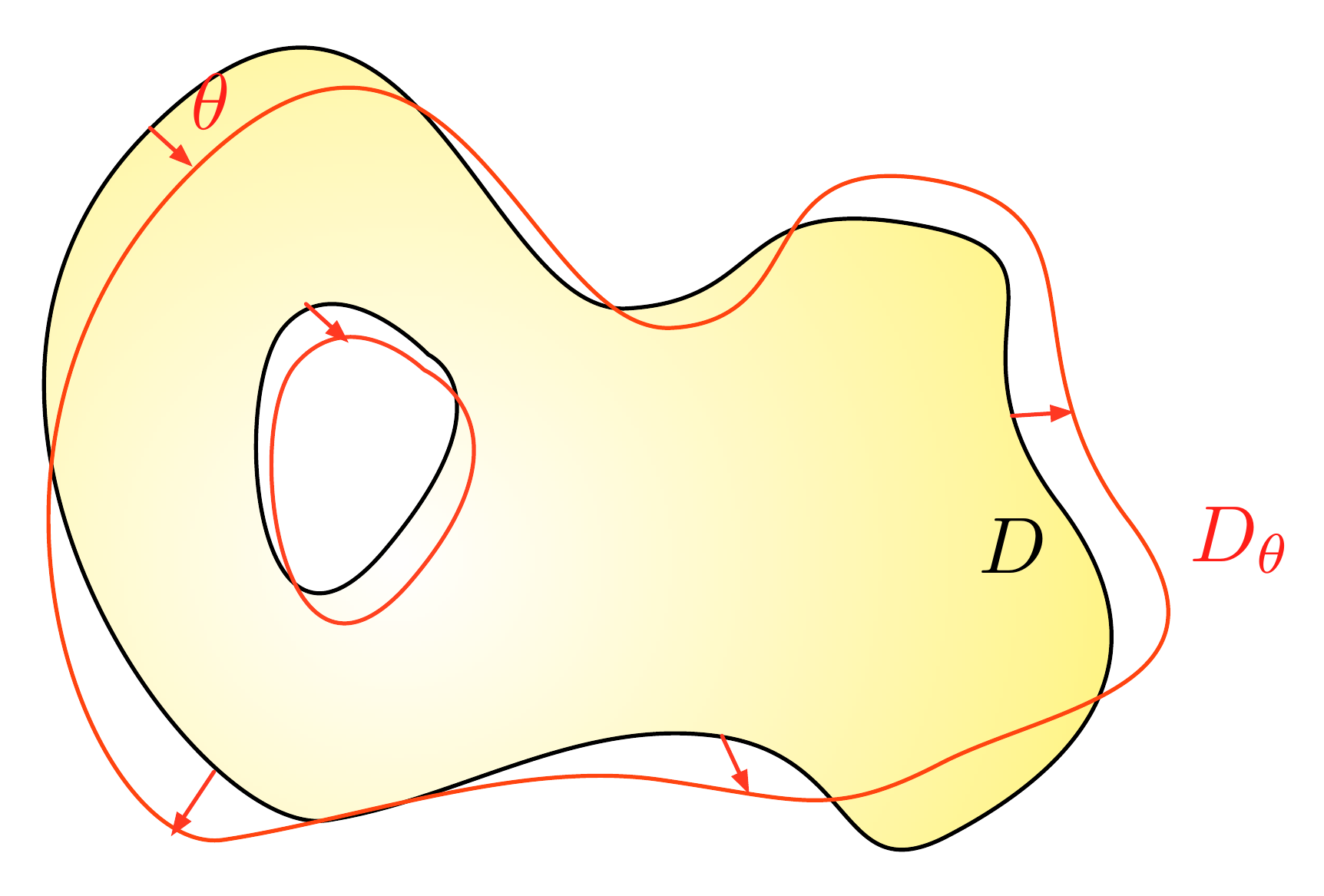}
\caption{One variation $D_\theta$ of a shape $D$ 
in the framework of Hadamard's method.}
\label{fighadamard}
\end{figure}

\begin{definition}
A function $J(D)$ of the domain is said to be \textit{shape differentiable} 
at $D$ if the underlying functional $\theta \mapsto J(D_\theta)$ which 
maps $W^{1,\infty}(\mathbb{R}^d,\mathbb{R}^d)$ into $\mathbb{R}$ is 
Fr\'echet-differentiable at $\theta =0$. The \textit{shape derivative} 
$\theta \mapsto J^\prime(D)(\theta)$ of $J$ at $D$ is the corresponding 
Fr\'echet derivative, so that the following asymptotic expansion holds 
in the vicinity of $\theta = 0$: 
$$ 
J(D_\theta) = J(D) + J^\prime(D)(\theta) + o(\theta), \text{ where } 
\frac{\lvert o(\theta)\lvert}{\lvert\lvert \theta \lvert\lvert_{W^{1,\infty}(\mathbb{R}^d,\mathbb{R}^d)}} 
\stackrel{\theta \rightarrow 0}{\longrightarrow} 0. 
$$
\end{definition}

In practice, shape optimization problems are defined only over a 
set ${\mathcal U}_{ad}$ of \textit{admissible shapes} (which e.g.\ 
satisfy volume or regularity constraints). To ensure that variations 
(\ref{varhadamard}) of admissible shapes remain admissible, one 
usually imposes that the deformations $\theta$ lie in a subset $\Theta_{ad} 
\subset W^{1,\infty}(\mathbb{R}^d,\mathbb{R}^d)$ of admissible deformations.  

In the following, we implicitly and systematically assume that the sets 
${\mathcal U}_{ad}$ and $\Theta_{ad}$ contain shapes or deformations 
with sufficient regularity to legitimate the use of the classical formulae 
for the shape derivatives of the considered functionals. We refer 
e.g.\ to \cite{Henrot} for precise statements on these issues.\\

\subsection{Tensor products and Hilbert spaces}~\\

\noindent In this subsection, we collect some definitions and properties 
around the notion of tensor product of Hilbert spaces. A more 
thorough exposition can be found in \cite{Kadison,Reed}. In what
follows, we consistently assume all the Hilbert spaces to be separable. 
This is merely a matter of convenience and most of the forthcoming 
definitions and results hold also in the general context.\\

\begin{definition}
Let $(H_1,\langle \cdot \rangle_{H_1})$, $(H_2,\langle \cdot \rangle_{H_2})$ 
be two (separable) Hilbert spaces. Then, for any $h_1 \in H_1$, $h_2 \in 
H_2$, the \textit{pure tensor} $h_1 \otimes h_2$ is the bilinear form 
acting on $H_1 \times H_2$ as
\begin{equation}\label{puretensor}
\forall (\varphi_1, \varphi_2) \in H_1 \times H_2, \:\: 
(h_1 \otimes h_2)(\varphi_1,\varphi_2) = \langle h_1, 
\varphi_1 \rangle_{H_1} \langle h_2, \varphi_2 \rangle_{H_2}.
\end{equation}
The vector space of all pure tensors 
$$ 
{\mathcal H} = \text{\rm span}\left\{ h_1 \otimes h_2, 
\:\: h_1 \in H_1, \: h_2 \in H_2 \right\}
$$ 
has a well-defined inner product ${\mathcal H} \times 
{\mathcal H} \rightarrow \mathbb{R}$ which is determined
by its action on pure tensors
\begin{equation}\label{defprod}
\forall \varphi_1 \otimes \varphi_2, \: \psi_1 \otimes \psi_2 
\in {\mathcal H}, \:\: \langle \varphi_1 \otimes \varphi_2, 
\psi_1 \otimes \psi_2 \rangle = \langle \varphi_1 , \psi_1
\rangle_{H_1} \langle \varphi_2, \psi_2 \rangle_{H_2}
\end{equation}
and extended to ${\mathcal H}$ by bilinearity. In particular, 
this inner product does not depend on the choice of the 
decomposition of elements of ${\mathcal H}$ as finite 
sums of pure tensors used to calculate it. The tensor 
product $H_1 \otimes H_2$ is finally the Hilbert space 
which is defined as the completion of 
${\mathcal H}$ for $\langle \cdot, \cdot \rangle$. 
\end{definition}~\par
\medskip

\begin{remark}
Alternative definitions can be found in the literature, e.g.\ 
relying on the notion of Hilbert-Schmidt operators.
\end{remark}~\par
\medskip

\begin{proposition}\label{propbasis}
Let $H_1$, $H_2$ be Hilbert spaces and let $\left\{ \phi_i \right\}_{i \in \mathbb{N}}$, 
$\left\{ \psi_j \right\}_{j \in \mathbb{N}}$ be associated orthonormal bases. 
\begin{enumerate}
\item 
The set $\left\{ \phi_i \otimes \psi_j \right\}_{i,j \in \mathbb{N}}$ is 
an orthonormal basis of $H_1 \otimes H_2$ for the inner product 
defined in (\ref{defprod}).
\item 
For any $h \in H_1 \otimes H_2$, there exists a unique family
$\left\{ u_i \right\}_{i \in \mathbb{N}}$ of elements in $H_1$ such that
$$ 
h = \sum\limits_{i=1}^\infty{u_i \otimes \psi_i}.
$$
\end{enumerate}
\end{proposition}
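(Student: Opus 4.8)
The plan is to prove both assertions by exploiting the fact that $\{\phi_i \otimes \psi_j\}_{i,j}$ is built from orthonormal bases, so I expect the inner product formula \eqref{defprod} to do all the work.

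For part (1), I would first verify orthonormality: for any indices,
\[
\langle \phi_i \otimes \psi_j, \phi_k \otimes \psi_l \rangle
= \langle \phi_i, \phi_k \rangle_{H_1} \langle \psi_j, \psi_l \rangle_{H_2}
= \delta_{ik}\,\delta_{jl},
\]
which is immediate from \eqref{defprod} and the orthonormality of the two given bases. The substantive point is \emph{completeness}. Since $H_1 \otimes H_2$ is by definition the completion of the span $\mathcal{H}$ of pure tensors, it suffices to show that every pure tensor $h_1 \otimes h_2$ lies in the closed span of $\{\phi_i \otimes \psi_j\}$. Writing $h_1 = \sum_i a_i \phi_i$ and $h_2 = \sum_j b_j \psi_j$ with convergence in $H_1$, $H_2$ respectively, I would argue that the double series $\sum_{i,j} a_i b_j\, (\phi_i \otimes \psi_j)$ converges in $H_1 \otimes H_2$ to $h_1 \otimes h_2$. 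The key estimate is that the tail of the partial sums has norm controlled by the tails of $\sum_i a_i \phi_i$ and $\sum_j b_j \psi_j$: using bilinearity and \eqref{defprod}, the squared norm of a difference of the form $(h_1 - \sigma_1)\otimes h_2 + \sigma_1 \otimes (h_2 - \sigma_2)$ factors into products of $H_1$- and $H_2$-norms of the corresponding tails, each of which vanishes. Thus pure tensors, hence all of $\mathcal{H}$, hence (by density) all of $H_1 \otimes H_2$, lie in the closed span, establishing that the orthonormal family is a basis.

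For part (2), I would use part (1) to expand a given $h \in H_1 \otimes H_2$ in the orthonormal basis,
\[
h = \sum_{i,j} c_{ij}\, \phi_i \otimes \psi_j, \qquad c_{ij} = \langle h, \phi_i \otimes \psi_j \rangle,
\]
with $\sum_{i,j} |c_{ij}|^2 = \lVert h \rVert^2 < \infty$. For each fixed $j$, the coefficients $(c_{ij})_i$ are square-summable, so I would define $u_j := \sum_i c_{ij}\, \phi_i \in H_1$; this converges because $\{\phi_i\}$ is an orthonormal basis of $H_1$. Regrouping the double sum by the index $j$ then gives $h = \sum_j u_j \otimes \psi_j$, where I would justify the interchange of summation order by the unconditional convergence of the orthonormal expansion. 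Uniqueness follows by pairing: testing $h = \sum_j u_j \otimes \psi_j$ against $\phi_i \otimes \psi_k$ and using \eqref{defprod} recovers $\langle u_k, \phi_i\rangle_{H_1} = c_{ik}$ for all $i$, which determines $u_k$ completely.

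I expect the only genuine obstacle to be the convergence bookkeeping in part (1): one must pass from finite partial sums, where \eqref{defprod} applies directly by bilinearity, to infinite series, controlling the cross terms so that the product structure of the norm is preserved in the limit. Once that tail estimate is in place, everything else is a routine consequence of the definition of $H_1 \otimes H_2$ as a completion together with standard Hilbert-space expansion arguments.
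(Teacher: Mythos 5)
Your proof is correct. Note, however, that the paper itself offers no proof of this proposition: it is stated as background material on Hilbert space tensor products, with the reader referred to the cited monographs of Kadison--Ringrose and Reed--Simon, so there is no in-paper argument to compare against. Your argument is the standard one from those references, and the two points that genuinely require care are handled properly: in part (1), completeness is reduced to approximating a pure tensor $h_1\otimes h_2$ by the finite sums $\bigl(\sum_{i\le N}a_i\phi_i\bigr)\otimes\bigl(\sum_{j\le N}b_j\psi_j\bigr)$, with the error split as $(h_1-\sigma_1)\otimes h_2+\sigma_1\otimes(h_2-\sigma_2)$ and each term estimated via the multiplicativity $\lVert u\otimes v\rVert=\lVert u\rVert_{H_1}\lVert v\rVert_{H_2}$ coming from (\ref{defprod}) together with the triangle inequality (your phrase ``factors into products, each of which vanishes'' is slightly loose --- only the tail factors vanish, while $\lVert h_2\rVert_{H_2}$ and $\lVert\sigma_1\rVert_{H_1}$ stay bounded --- but the estimate is the right one); in part (2), the regrouping $h=\sum_j u_j\otimes\psi_j$ with $u_j=\sum_i c_{ij}\phi_i$ is legitimate because the vectors $u_j\otimes\psi_j$ are pairwise orthogonal with $\sum_j\lVert u_j\otimes\psi_j\rVert^2=\sum_{i,j}\lvert c_{ij}\rvert^2<\infty$, so the series converges and its inner products against the basis $\{\phi_k\otimes\psi_l\}$ match those of $h$, while uniqueness follows from your pairing argument and the continuity of the inner product.
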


The main purpose of tensor algebra is to transform 
multilinear expressions into linear ones. Before stating a 
version of the universal property of tensor products in the 
Hilbert space context, let us recall that a \textit{weak 
Hilbert-Schmidt mapping} $b: H_1 \times H_2 \rightarrow K$ 
between Hilbert spaces $H_1$, $H_2$, $(K,\langle \rangle_K)$ 
is a bilinear mapping with the property that there is a constant 
$c >0$ such that
$$ 
\forall u \in K, \:\: \sum\limits_{i,j=1}^\infty{\lvert \langle b(\phi_i,\psi_j), 
u \rangle_K \lvert^2} \leq c \lvert\lvert u \lvert\lvert_K^2
$$
for given (then any) orthonormal bases $\left\{\phi_i \right\}_{i \in \mathbb{N}}$
of $H_1$ and $\left\{\psi_j \right\}_{j \in \mathbb{N}}$ of $H_2$, respectively.\\
 
\begin{proposition}\label{propuniv}
Let $H_1$, $H_2$, $K$ be Hilbert spaces, and $b : H_1 \times 
H_2 \rightarrow K$ be a weak Hilbert-Schmidt mapping. Then, 
there exists a unique bounded operator $\ell : H_1 \otimes H_2 
\rightarrow K$ such that the following diagram is commutative
$$
 \xymatrix{
   H_1 \times H_2  \ar[d] \ar^{b}[rd] &  \\
    H_1 \otimes H_2 \ar^{\ell}[r] & K
  }
$$
where the mapping $H_1 \times H_2 \rightarrow H_1 \otimes H_2$ 
is simply $(h_1,h_2) \mapsto h_1\otimes h_2$.
\end{proposition}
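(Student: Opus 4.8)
The plan is to build $\ell$ explicitly on the orthonormal basis furnished by Proposition \ref{propbasis} and then extend it by continuity, the whole difficulty being concentrated in a single boundedness estimate that the weak Hilbert--Schmidt hypothesis is precisely designed to supply. I would fix orthonormal bases $\{\phi_i\}$ of $H_1$ and $\{\psi_j\}$ of $H_2$, so that $\{\phi_i \otimes \psi_j\}_{i,j}$ is an orthonormal basis of $H_1 \otimes H_2$ by Proposition \ref{propbasis}. On the dense subspace of finite linear combinations $x = \sum_{i,j} c_{ij}\, \phi_i \otimes \psi_j$ I would simply set $\ell(x) = \sum_{i,j} c_{ij}\, b(\phi_i,\psi_j)$; this assignment is forced by the required identity $\ell(\phi_i \otimes \psi_j) = b(\phi_i,\psi_j)$ together with linearity, which already settles uniqueness on a dense set.

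The key step is to show that this $\ell$ is bounded, with the weak Hilbert--Schmidt constant $c$ controlling its norm. I would use the Riesz duality $\|\ell(x)\|_K = \sup_{\|u\|_K \le 1} |\langle \ell(x), u\rangle_K|$ and expand $\langle \ell(x), u\rangle_K = \sum_{i,j} c_{ij}\, \langle b(\phi_i,\psi_j), u\rangle_K$. A Cauchy--Schwarz inequality splits this into $\bigl( \sum_{i,j} |c_{ij}|^2 \bigr)^{1/2}\bigl( \sum_{i,j} |\langle b(\phi_i,\psi_j), u\rangle_K|^2 \bigr)^{1/2}$. The first factor is exactly $\|x\|_{H_1 \otimes H_2}$ by (\ref{defprod}), while the second is at most $\sqrt{c}\,\|u\|_K$ by the defining property of a weak Hilbert--Schmidt mapping. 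Taking the supremum over $u$ yields $\|\ell(x)\|_K \le \sqrt{c}\,\|x\|_{H_1 \otimes H_2}$, so $\ell$ is bounded on a dense subspace and extends uniquely to a bounded operator $\ell : H_1 \otimes H_2 \to K$ with $\|\ell\| \le \sqrt{c}$.

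It remains to verify commutativity of the diagram, i.e.\ $\ell(h_1 \otimes h_2) = b(h_1,h_2)$ for every pure tensor. For basis pairs this holds by construction, and for finite linear combinations of basis vectors it follows purely from the bilinearity of both $b$ and the map $(h_1,h_2)\mapsto h_1 \otimes h_2$, with no appeal to continuity. To pass to arbitrary $h_1,h_2$ I would approximate by finite truncations of their orthonormal expansions and invoke continuity: the tensor map is continuous since $\|h_1 \otimes h_2\| = \|h_1\|\,\|h_2\|$, and $\ell$ is continuous by the previous step, so $(h_1,h_2)\mapsto \ell(h_1\otimes h_2)$ is a bounded bilinear map coinciding with $b$ on the total set of basis pairs. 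Uniqueness of $\ell$ is then immediate, since any bounded operator is determined by its restriction to the dense span of the pure tensors.

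I expect the boundedness estimate to be the crux: it is the only place where the weak Hilbert--Schmidt hypothesis enters, and it is what converts the merely algebraic definition of $\ell$ on finite combinations into a genuine bounded operator on the completion. The secondary subtlety is the passage from basis pairs to arbitrary pure tensors in the commutativity claim, where one must check that the hypothesis indeed forces $b$ to be jointly continuous (the same Cauchy--Schwarz bound gives $\|b(v,w)\|_K \le \sqrt{c}\,\|v\|\,\|w\|$ on finite combinations) so that the identity survives the limit; everything else is routine bookkeeping with orthonormal expansions.
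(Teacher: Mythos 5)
The paper itself contains no proof of Proposition \ref{propuniv}: it is quoted as background material, with the reader referred to \cite{Kadison,Reed}. Your construction is essentially the standard proof from those references: define $\ell$ on the orthonormal basis $\{\phi_i\otimes\psi_j\}$ of $H_1\otimes H_2$, obtain $\|\ell(x)\|_K\le\sqrt{c}\,\|x\|_{H_1\otimes H_2}$ on finite combinations by Cauchy--Schwarz together with the weak Hilbert--Schmidt inequality, extend by density, and check commutativity on pure tensors. The definition of $\ell$, the boundedness estimate, and the uniqueness argument are all correct.

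There is, however, a genuine gap in the final step, and it sits exactly where you located the ``secondary subtlety.'' You infer from the bound $\|b(v,w)\|_K\le\sqrt{c}\,\|v\|\,\|w\|$ \emph{on finite combinations of basis vectors} that $b$ is jointly continuous on all of $H_1\times H_2$, so that the identity $\ell(h_1\otimes h_2)=b(h_1,h_2)$ survives the truncation limit. That inference is invalid: bilinearity is a purely algebraic condition involving finite sums, so the values of $b$ outside the finite spans are not constrained at all by its values on basis pairs. Concretely, extend $\{\phi_i\}\cup\{x_0\}$ and $\{\psi_j\}\cup\{y_0\}$ (with $x_0,y_0$ outside the algebraic spans) to Hamel bases, and let $b$ vanish on all pairs of Hamel basis vectors except $b(x_0,y_0)\neq 0$: this $b$ is bilinear, vanishes on every $(\phi_i,\psi_j)$ (so it satisfies the weak Hilbert--Schmidt inequality for that fixed pair of bases, and is bounded on finite combinations), yet $b(x_0^{(n)},y_0^{(n)})=0\not\to b(x_0,y_0)$ along the truncated expansions, and no bounded $\ell$ can satisfy $\ell(x_0\otimes y_0)=b(x_0,y_0)$. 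So boundedness on a dense set does not give the continuity your limiting argument needs; indeed the proposition would be false if the hypothesis only concerned one fixed pair of bases. The repair is to use the clause of the paper's definition you never invoked: the inequality holds \emph{for any} orthonormal bases. Given nonzero $h_1,h_2$, complete $h_1/\|h_1\|$ and $h_2/\|h_2\|$ to orthonormal bases of $H_1$ and $H_2$; retaining only the $(1,1)$ term of the inequality for these bases gives $\lvert\langle b(h_1,h_2),u\rangle_K\rvert\le\sqrt{c}\,\|h_1\|\,\|h_2\|\,\|u\|_K$ for every $u\in K$, hence $\|b(h_1,h_2)\|_K\le\sqrt{c}\,\|h_1\|\,\|h_2\|$ on all of $H_1\times H_2$. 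With this uniform bound $b$ really is jointly continuous, your truncation argument closes, and the rest of the proof stands. (Equivalently: in \cite{Kadison} boundedness of $b$ is built into the definition of a weak Hilbert--Schmidt mapping, which is why the textbook proof has no such step to fill.)
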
~\par
\medskip

We now come to the following very important identification of 
$L^2$ spaces taking values in a Hilbert space.\\

\begin{proposition}\label{propident}
Let $(\Omega,\mu)$ be a measured space, and $H$ be a 
Hilbert space. Then, the mapping
$$ 
L^2(\Omega,\mu) \times H \ni (\xi,h) 
\mapsto \xi h \in L^2(\Omega,\mu,H)
$$
induces a natural isomorphism $L^2(\Omega,\mu) \otimes H 
\simeq L^2(\Omega,\mu,H)$ between Hilbert spaces.
\end{proposition}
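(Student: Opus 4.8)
The plan is to realize the asserted isomorphism as the canonical operator supplied by the universal property of Proposition \ref{propuniv}, and then to check that this operator is an isometry onto all of $L^2(\Omega,\mu,H)$. First I would consider the bilinear mapping $b:L^2(\Omega,\mu)\times H\to L^2(\Omega,\mu,H)$ defined by $b(\xi,h)(\omega)=\xi(\omega)h$. This is well-defined, since for fixed $h\in H$ the function $\omega\mapsto \xi(\omega)h$ is strongly measurable (the product of a scalar measurable function by a fixed vector) and satisfies
\[
\int_\Omega \|\xi(\omega)h\|_H^2\, d\mu(\omega) = \|\xi\|_{L^2(\Omega,\mu)}^2\,\|h\|_H^2 <\infty .
\]
To feed $b$ into Proposition \ref{propuniv}, I would verify it is weak Hilbert--Schmidt. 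Fixing orthonormal bases $\{\phi_i\}$ of $L^2(\Omega,\mu)$ and $\{e_j\}$ of $H$ (both countable by the standing separability assumption), and writing $g_j(\omega)=\langle e_j,F(\omega)\rangle_H$ for $F\in L^2(\Omega,\mu,H)$, one computes $\langle b(\phi_i,e_j),F\rangle_{L^2(\Omega,\mu,H)}=\langle \phi_i, g_j\rangle_{L^2(\Omega,\mu)}$; Bessel's inequality in $L^2(\Omega,\mu)$ followed by Bessel's inequality in $H$ (the interchange of sum and integral being legitimate by Tonelli, all terms being nonnegative) yields $\sum_{i,j}|\langle b(\phi_i,e_j),F\rangle|^2\le \|F\|_{L^2(\Omega,\mu,H)}^2$. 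Hence Proposition \ref{propuniv} produces a unique bounded operator $\ell:L^2(\Omega,\mu)\otimes H\to L^2(\Omega,\mu,H)$ with $\ell(\xi\otimes h)=\xi h$.

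Next I would show that $\ell$ is an isometry. On pure tensors a direct computation gives
\[
\langle \xi_1 h_1,\xi_2 h_2\rangle_{L^2(\Omega,\mu,H)} = \int_\Omega \xi_1(\omega)\xi_2(\omega)\,\langle h_1,h_2\rangle_H\, d\mu(\omega) = \langle \xi_1,\xi_2\rangle_{L^2(\Omega,\mu)}\,\langle h_1,h_2\rangle_H,
\]
which is precisely the value $\langle \xi_1\otimes h_1,\xi_2\otimes h_2\rangle$ assigned by the tensor inner product in (\ref{defprod}). Thus $\ell$ preserves inner products on pure tensors, hence on their span $\mathcal H$ by bilinearity, and finally on all of $L^2(\Omega,\mu)\otimes H$ by continuity and the density of $\mathcal H$. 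In particular $\ell$ is injective with closed range.

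It remains to prove surjectivity, which I expect to be the crux. By Proposition \ref{propbasis}(1) the family $\{\phi_i\otimes e_j\}_{i,j}$ is an orthonormal basis of $L^2(\Omega,\mu)\otimes H$; since $\ell$ is an isometry, its range is the closed linear span of the orthonormal system $\{\phi_i e_j\}_{i,j}$, so it suffices to show this system is complete in $L^2(\Omega,\mu,H)$. Let $F\in L^2(\Omega,\mu,H)$ be orthogonal to every $\phi_i e_j$. For each fixed $j$ the scalar function $g_j=\langle e_j,F(\cdot)\rangle_H$ lies in $L^2(\Omega,\mu)$ (it is measurable because $F$ is strongly measurable, and dominated by $\|F(\cdot)\|_H$), and $\langle \phi_i,g_j\rangle_{L^2(\Omega,\mu)}=0$ for all $i$; completeness of $\{\phi_i\}$ then forces $g_j(\omega)=0$ for $\mu$-almost every $\omega$. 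Taking the union over $j$ of the exceptional null sets — which is again null, as a countable union — we obtain that $F(\omega)\perp e_j$ for all $j$ for a.e. $\omega$, whence $F(\omega)=0$ a.e. by completeness of $\{e_j\}$ in $H$. Therefore $F=0$, the system $\{\phi_i e_j\}$ is complete, and $\ell$ is onto. The only genuinely delicate points are measure-theoretic: passing between the vector-valued $F$ and its scalar components via strong measurability, the Tonelli interchange above, and the fact that a countable union of null sets is null — all of which hinge on the separability hypotheses. Since the whole construction runs through the canonical bilinear map $(\xi,h)\mapsto \xi h$ and, by the argument just given, is independent of the chosen bases, the resulting isometric isomorphism is natural, which completes the proof.
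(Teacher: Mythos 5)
Your argument is correct and complete. There is, however, nothing in the paper to compare it against line by line: Proposition \ref{propident} is stated there without proof, as standard material on Hilbert tensor products for which the paper defers to its references (Kadison--Ringrose, Reed--Simon). Your proof is essentially the classical one found in those texts, with one organizational twist that fits the paper's framework nicely: instead of verifying by hand that $(\xi,h)\mapsto\xi h$ induces a well-defined linear map on finite sums of pure tensors, you manufacture the operator $\ell$ from the universal property of Proposition \ref{propuniv}, which obliges you to check the weak Hilbert--Schmidt bound. That check (Bessel in $L^2(\Omega,\mu)$ for each fixed $j$, Tonelli to swap sum and integral, Parseval in $H$) is correct and gives the bound with constant $c=1$. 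The remaining steps are also sound: the pure-tensor inner-product computation reproduces exactly the inner product (\ref{defprod}), so $\ell$ is isometric on the dense span ${\mathcal H}$ and hence globally, with closed range since its domain is complete; and surjectivity reduces to completeness of the orthonormal system $\{\phi_i e_j\}$ in $L^2(\Omega,\mu,H)$, which you establish correctly by using strong measurability of $F$ (so that each $g_j=\langle e_j,F(\cdot)\rangle_H$ is a genuine scalar $L^2$ function) together with the fact that a countable union of null sets is null. You are also right to single out where separability enters: both factors need countable orthonormal bases, which is precisely the paper's standing assumption, and for nonseparable $H$ the surjectivity argument as written would break down.
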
~\par
\medskip

\begin{example}
In the particular case that $H = L^2(D,\nu)$, where $(D,\nu)$ 
is another measured space, Proposition \ref{propident} yields 
the isomorphism $L^2(\Omega,\mu) \otimes L^2(D,\nu) \approx 
L^2(\Omega \times D, \mu \otimes \nu)$, where $\mu \otimes \nu$ 
stands for the usual product measure of $\mu$ and $\nu$ on 
$\Omega \times D$ and the above identification is supplied by
$$
\forall u \in L^2(\Omega,\mu), \: v \in L^2(D,\nu), 
\:\:(u \otimes v)(x,y) = u(x)v(y), \:\: x \in \Omega, \: y \in D.
$$
In the following, we consistently employ this identification.
\end{example}\par
\medskip

\subsection{First and second moments analysis}~\\

\noindent
In this section, we slip into the probabilistic context, so 
to speak, relying on the framework of \cite{ST0,ST}.
Hence, let $(\Omega, \Sigma, \mathbb{P})$ be a complete 
probability space and let $H$ be a (separable) Hilbert space. 
To keep notation simple, we omit to mention the measure 
$\mathbb{P}$ on $\Omega$ when the context is clear.

\begin{definition}\label{defcor}
\begin{enumerate}
\item 
The mean value $\mathbb{E}: L^2(\Omega) \otimes 
H \rightarrow H$ is the unique linear and bounded 
operator which satisfies
$$ 
\forall \xi \in L^2(\Omega), \: u \in H, \:\: 
\mathbb{E}(\xi \otimes u) = \left(\int_\Omega{\xi } \right) u.
$$
\item 
The correlation $\text{\rm Cor}(u,v) \in H \otimes H$ between 
two elements $u,v \in L^2(\Omega) \otimes H$ is defined as
$$
\text{\rm Cor}(u,v) = \sum_{i=1}^\infty{u_i \otimes v_i},
$$
where $u = \sum_{i=1}^\infty{\xi_i \otimes u_i}$ and 
$v = \sum_{i=1}^\infty{\xi_i \otimes v_i}$ are the 
decompositions of $u$ and $v$ supplied by Proposition 
\ref{propbasis}, according to an orthonormal basis 
$\left\{\xi_i \right\}_{i \in \mathbb{N}}$ of $L^2(\Omega)$. $\text{\rm Cor}(u,v)$ 
is moreover independent of the basis $\left\{ \xi_i \right\}_{i 
\in \mathbb{N}}$ used to perform the above construction. 
\item
The function $\text{\rm Cor}(u,u)$ is simply denoted as 
$\text{\rm Cor}(u)$ and called the (two-point) correlation of $u$.
\end{enumerate}
\end{definition}~\par
\medskip

This terminology is consistent with the usual definitions of 
the mean and correlation of random fields. Indeed, if $D 
\subset \mathbb{R}^d$ is a domain (equipped with the 
usual Lebesgue measure) and $H = L^2(D)$, then it is 
easily seen that the mean value $\mathbb{E}(u) \in L^2(D)$ 
of an element $u \in L^2(\Omega) \otimes L^2(D) \simeq 
L^2(\Omega \times D)$ is
$$ 
\mathbb{E}(u) = \int_{\Omega}{u(\cdot, \omega) 
\:\mathbb{P}(d\omega)},
$$
and that the correlation $\text{\rm Cor}(u,v) \in L^2(D) 
\otimes L^2(D) \simeq L^2(D\times D)$ of $u,v \in 
L^2(\Omega) \otimes L^2(D) $ is given by
$$ 
\text{\rm Cor}(u,v)(x,y) = \int_\Omega{u(x,\omega) 
v(y,\omega) \mathbb{P}(d\omega)} \:\: a.e.\:(x,y) \in D \times D.
$$ 

For further use, we have to give a sense to expressions 
of the form $\text{\rm Cor}(u,v)(x,x)$, where $u,v \in L^2(\Omega) 
\otimes L^2(D)$ and $x \in D$. Note that this is not completely 
straightforward since $\text{\rm Cor}(u,v)$ is only an element in 
$L^2(D\times D)$; hence, it is a priori not defined on null measure 
subsets of $D \times D$. Doing so is the purpose of the next lemma:\\

\begin{lemma}\label{lemcom}
\begin{enumerate}
\item 
Let ${\mathcal F} \subset L^2(D\times D)$ be the 
subspace defined as
$$
{\mathcal F} = \text{\rm span}\left\{ u \otimes v, \:\: u, v \in L^2(D)\right\}, 
$$
equipped with the \textit{nuclear norm}
$$ 
\lvert\lvert h \lvert \lvert_{*} = \inf\left\{ \sum\limits_{i=1}^N
{\lvert\lvert u_i \lvert\lvert_{L^2(D)} \lvert\lvert v_i \lvert\lvert_{L^2(D)}}, 
\:\: h = \sum\limits_{i=1}^N{u_i \otimes v_i}, \: u_i, v_i \in L^2(D) \right\}.
$$
There is a unique linear and continuous operator $\gamma: 
{\mathcal F} \rightarrow L^1(D)$ such that, for any functions 
$u, v \in {\mathcal D}(D)$, it holds that
$$ 
\gamma(u \otimes v)(x) = u(x)v(x)\:\:\text{a.e.}\ x \in D.
$$
\item 
Let ${\mathcal F}_c \subset L^2(D\times D)$ be the 
subspace defined as
$$
{\mathcal F}_c= \text{\rm span}\left\{ \text{\rm Cor}(u,v), \:\: 
u, v \in L^2(\Omega) \otimes L^2(D)\right\}, 
$$
equipped with the norm 
$$ 
\lvert\lvert h \lvert \lvert_{c,*} = \inf\left\{ \sum\limits_{i=1}^N
{\lvert\lvert u_i \lvert\lvert_{L^2(\Omega \times D)} \lvert\lvert v_i 
\lvert\lvert_{L^2(\Omega \times D)}}, \:\: h = \sum\limits_{i=1}^N
{\text{\rm Cor}(u_i,v_i)}, \: u_i, v_i \in L^2(\Omega) \otimes L^2(D) \right\}.
$$
There is a unique linear and continuous operator $\gamma_c :
{\mathcal F}_c \rightarrow L^1(D)$ such that, for any functions 
$u, v \in {\mathcal D}(\Omega \times D)$,  it holds that
$$ 
\gamma_c(\text{\rm Cor}(u,v))(x) = \text{\rm Cor}(u,v)(x,x)
\:\:\text{a.e.}\ x \in D.
$$
\item 
The following diagram is commutative:
$$
 \xymatrix{
    L^2(\Omega \times D)^2 \ar^{\text{\rm Cor}}[r] \ar^{\gamma}[d]  &  L^2(D\times D) \ar^{\gamma_c}[d] \\
    L^1(\Omega \times D) \ar^{\int_\Omega}[r] & L^1(D)
  }
$$
\end{enumerate}
\end{lemma}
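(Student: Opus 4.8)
The plan is to handle the three assertions in order, reducing (2) and (3) to (1). The entire difficulty is that restricting a function in $L^2(D\times D)$ to the diagonal $\{x=y\}$ is a priori meaningless, the diagonal being a null set; what rescues $\gamma$ and $\gamma_c$ is the tensor structure of the elements of $\mathcal{F}$ and $\mathcal{F}_c$, together with the fact that these spaces carry the nuclear norms rather than the $L^2$ norm. I expect the representation-independence of the would-be diagonal product to be the main obstacle, and I would dispose of it by testing against $L^\infty(D)$ and invoking $L^1$--$L^\infty$ duality.

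For (1), fix $\phi\in L^\infty(D)$ and consider the bilinear form $B_\phi(u,v)=\int_D uv\,\phi\,dx$ on $L^2(D)\times L^2(D)$. This is genuinely well defined, the product $uv$ lying in $L^1(D)$ by Cauchy--Schwarz with no diagonal ambiguity, and it is bounded by $\lVert\phi\rVert_{L^\infty}\lVert u\rVert_{L^2}\lVert v\rVert_{L^2}$. Since the pure tensors $\phi_i\otimes\psi_j$ are linearly independent in $L^2(D\times D)$ by Proposition \ref{propbasis}, the space $\mathcal{F}$ is the algebraic tensor product of $L^2(D)$ with itself, so $B_\phi$ induces a well-defined linear functional $\widetilde B_\phi$ on $\mathcal{F}$ with $\widetilde B_\phi\bigl(\sum_i u_i\otimes v_i\bigr)=\sum_i\int_D u_iv_i\,\phi\,dx$. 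The decisive consequence is that for $h=\sum_i u_i\otimes v_i\in\mathcal{F}$ the $L^1$ function $\sum_i u_iv_i$ is independent of the chosen finite representation, since $\int_D\bigl(\sum_i u_iv_i\bigr)\phi\,dx=\widetilde B_\phi(h)$ for every $\phi$; I would then set $\gamma(h):=\sum_i u_iv_i$, which plainly equals $uv$ on a pure tensor and in particular gives the prescribed formula for $u,v\in\mathcal{D}(D)$. The estimate $\lVert\gamma(h)\rVert_{L^1}=\sup_{\lVert\phi\rVert_{L^\infty}\le1}\lvert\widetilde B_\phi(h)\rvert\le\lVert h\rVert_*$ is then immediate, because $\lvert\widetilde B_\phi(h)\rvert\le\lVert\phi\rVert_{L^\infty}\sum_i\lVert u_i\rVert_{L^2}\lVert v_i\rVert_{L^2}$ for every representation and one passes to the infimum. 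Uniqueness follows from the density of smooth pure tensors in $(\mathcal{F},\lVert\cdot\rVert_*)$: if $u_n\to u$ and $v_n\to v$ in $L^2(D)$, then $\lVert u\otimes v-u_n\otimes v_n\rVert_*\le\lVert u-u_n\rVert_{L^2}\lVert v\rVert_{L^2}+\lVert u_n\rVert_{L^2}\lVert v-v_n\rVert_{L^2}\to0$, and $\mathcal{D}(D)$ is dense in $L^2(D)$.

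For (2) and (3), the key remark is that a correlation already belongs to the nuclear completion of $\mathcal{F}$. Decomposing $u=\sum_k\xi_k\otimes u_k$ and $v=\sum_k\xi_k\otimes v_k$ along an orthonormal basis $\{\xi_k\}$ of $L^2(\Omega)$, Definition \ref{defcor} gives $\Cor(u,v)=\sum_k u_k\otimes v_k$, and Cauchy--Schwarz on the index sum yields $\lVert\Cor(u,v)\rVert_*\le\sum_k\lVert u_k\rVert_{L^2}\lVert v_k\rVert_{L^2}\le\lVert u\rVert_{L^2(\Omega\times D)}\lVert v\rVert_{L^2(\Omega\times D)}$, using $\sum_k\lVert u_k\rVert_{L^2}^2=\lVert u\rVert_{L^2(\Omega\times D)}^2$. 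Hence the partial sums are $\lVert\cdot\rVert_*$-Cauchy, $\Cor(u,v)$ lies in the completion $\overline{\mathcal{F}}$, and $\lVert\cdot\rVert_*\le\lVert\cdot\rVert_{c,*}$ on $\mathcal{F}_c$. I would extend $\gamma$ continuously to $\overline{\mathcal{F}}$, which is legitimate since $L^1(D)$ is complete, and define $\gamma_c$ as its restriction to $\mathcal{F}_c$; continuity for $\lVert\cdot\rVert_{c,*}$ is then read off from $\lVert\gamma_c(h)\rVert_{L^1}\le\lVert h\rVert_*\le\lVert h\rVert_{c,*}$. Using $\lVert\cdot\rVert_*$-continuity together with $\gamma(w\otimes z)=wz$ for all $w,z\in L^2(D)$, I obtain $\gamma_c(\Cor(u,v))=\sum_k u_kv_k$ in $L^1(D)$; on the other hand a direct computation with $\{\xi_k\}$ and the orthonormality relations gives $\int_\Omega uv\,d\P=\sum_k u_kv_k$. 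These two identities say precisely that $\gamma_c\circ\Cor$ and $\int_\Omega\circ\gamma$ coincide, which is the commutativity asserted in (3); and when $u,v\in\mathcal{D}(\Omega\times D)$ the common value is the honest diagonal $\Cor(u,v)(x,x)$, which is the defining property of $\gamma_c$. Uniqueness again follows from density of smooth correlations in $(\mathcal{F}_c,\lVert\cdot\rVert_{c,*})$, by the same approximation as above applied to $u,v\in L^2(\Omega\times D)$.

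The one place where I expect to have to tread carefully, beyond the representation-independence already discussed, is the meaning and density of $\mathcal{D}(\Omega\times D)$ inside $L^2(\Omega\times D)$, which is what pins $\gamma_c$ down uniquely and lets me identify its values with a pointwise diagonal; this is where some mild structure on the abstract space $\Omega$, or the choice of a suitable dense class of test functions, has to be invoked. Everything else is a combination of Cauchy--Schwarz, the $L^1$--$L^\infty$ duality that turns the diagonal restriction into a bounded family of functionals, and routine density arguments.
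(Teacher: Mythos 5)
Your part (1) is correct, and it is actually tighter than the paper's own argument, by a different route: the paper defines $\gamma$ only on the span ${\mathcal G}$ of smooth pure tensors, declares well-definedness and continuity ``obvious'', and extends by density, whereas your duality argument (testing against $\phi\in L^\infty(D)$, after identifying ${\mathcal F}$ with the algebraic tensor product) proves representation-independence for \emph{arbitrary} finite $L^2$ representations, and therefore yields the estimate $\|\gamma(h)\|_{L^1}\le\|h\|_*$ with respect to the true nuclear norm --- the infimum over all representations, not merely the smooth ones. That is precisely the point the paper's ``obviously continuous'' glosses over, and your argument settles it.

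Parts (2)--(3), however, contain a genuine gap: the phrase ``define $\gamma_c$ as the restriction of the extension $\overline{\gamma}$ to ${\mathcal F}_c$'' is not yet meaningful. ${\mathcal F}_c$ is by definition a subspace of $L^2(D\times D)$, while $\overline{\mathcal F}$ is an abstract completion; the inclusion ${\mathcal F}\hookrightarrow L^2(D\times D)$ extends to a continuous map $\iota:\overline{\mathcal F}\to L^2(D\times D)$ (since $\|\cdot\|_{L^2}\le\|\cdot\|_*$), but nothing you prove shows that $\iota$ is injective. Concretely, if $\sum_j\Cor(u_j,v_j)=\sum_j\Cor(u_j',v_j')$ as $L^2(D\times D)$ functions, the two associated elements of $\overline{\mathcal F}$ (nuclear-norm limits of partial sums of the basis expansions) could a priori differ, and then $\gamma_c(h)$ would depend on the representation. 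What is needed is: if $\sum_k a_k\otimes b_k=0$ a.e.\ on $D\times D$ with $\sum_k\|a_k\|_{L^2}\|b_k\|_{L^2}<\infty$, then $\sum_k a_kb_k=0$ in $L^1(D)$. Your finite-sum duality trick does not transfer to this infinite-sum statement, because the functionals $\widetilde B_\phi$ are $\|\cdot\|_*$-continuous but \emph{not} $L^2(D\times D)$-continuous, so you may not pass to the $L^2$-limit along partial sums. The missing fact is true --- it is the classical statement that the nuclear (trace-class) completion of $L^2(D)\otimes L^2(D)$ embeds injectively into the Hilbert--Schmidt class, equivalently that the trace of a trace-class kernel is well defined --- and it can be proved with your own tools plus Parseval: for $\phi\in L^\infty(D)$ and an orthonormal basis $\{e_n\}$ of $L^2(D)$ one has $\sum_k\int_D\phi\, a_kb_k\,dx=\sum_n\sum_k\langle\phi a_k,e_n\rangle\langle e_n,b_k\rangle$ (the double sum converges absolutely, so the interchange is licit), and for each fixed $n$ the inner sum is the $L^2(D\times D)$ pairing of the kernel $\sum_k a_k\otimes b_k$ with $(\phi e_n)\otimes e_n$, hence vanishes. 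For comparison, the paper sidesteps the completion entirely in part (2): it defines $\gamma_c$ on correlations of smooth functions, where the diagonal value is classically meaningful, and extends by density (though it, too, is silent on why the continuity estimate holds for the infimum over all, not just smooth, representations). Your closing concern about the meaning of ${\mathcal D}(\Omega\times D)$ for an abstract probability space $\Omega$ is legitimate, but it is an imprecision inherited from the paper's statement rather than a defect of your argument.
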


\begin{proof}
\textit{1.} First, note that $\lvert\lvert \cdot \lvert \lvert_*$ does 
define a norm on ${\mathcal F}$ since, for arbitrary $h \in {\mathcal F}$, 
one has that $\lvert \lvert h \lvert \lvert_{L^2(D\times D)} \leq \lvert\lvert h 
\lvert\lvert_*$. What is more, the subspace ${\mathcal G} := \text{\rm span}
\left\{ u \otimes v, \:\: u,v \in {\mathcal D}(D) \right\}$ is obviously dense in 
${\mathcal F}$ for the norm $\lvert\lvert\cdot \lvert\lvert_*$. Define now
the mapping $\gamma : {\mathcal G } \rightarrow L^1(D)$ by
$$ 
\forall h = \sum\limits_{i=1}^N{u_i \otimes v_i}, \:\: u_i,v_i \in 
{\mathcal D}(D), \quad  \gamma(h)(x) = \sum\limits_{i=1}^N
{u_i(x)v_i(x)}\:\:\text{a.e.}\ x \in D. 
$$
This mapping is obviously well-defined and continuous provided 
that ${\mathcal G}$ is endowed with the norm $\lvert\lvert \cdot 
\lvert\lvert_*$. Thus, it is uniquely extended into a linear and bounded 
operator $\gamma : {\mathcal F} \rightarrow L^1(D)$, which fulfills 
the desired properties.\\[1ex]
\textit{2.} The proof is completely analogous to that of \textit{1.} \\[1ex]
\textit{3.} The commutation relations obviously hold for smooth 
functions $\varphi, \psi \in {\mathcal D}(\Omega \times D)$, and 
the general result follows by continuity of the mappings at play.
\end{proof}~\par
\medskip

\begin{remark}\label{remcom}
\noindent
\begin{itemize}
\item 
To keep notations simple, we will often omit to mention 
explicitly the operators $\gamma$ and $\gamma_c$, and, 
e.g.\ we write expressions such as $(u \otimes v)(x,x)$ 
instead of $\gamma(u \otimes v)(x,x)$.
\item 
Analogous definitions and commutation relations, involving 
different operators (derivatives, etc.), hold and can be proved 
in an identical way when the space $H$ used in the Definition
\ref{defcor} of the correlation function is, for instance, $H = 
[L^2(D)]^d$ or $H = H^1(D)$. We do not specify these natural 
relations so to keep the exposition simple.
\end{itemize}
\end{remark}

\section{Applications in concrete situations}\label{sec:illustration}
\subsection{Quadratic shape functionals in the context of 
the Poisson equation}\label{secilluslap}~\\

\noindent
Let $D$ be a smooth domain and $f \in L^2(\Omega, 
L^2(\mathbb{R}^d))$ be a random source term. For 
almost any event $\omega \in \Omega$, let $u_D(\cdot, 
\omega) \in H^1_0(D)$ be the unique solution to the 
Poisson equation
\begin{equation}\label{lap}
 \left\{
\begin{array}{ll}
-\Delta u(\cdot,\omega) = f(\omega) & \mbox{in } D,\\[1ex]
\phantom{-\Delta} u(\cdot,\omega) = 0 & \mbox{on }\partial D.
\end{array} 
\right. 
\end{equation}
By the standard Lax-Milgram theory, this equation has 
a unique solution in $H^1_0(D)$ for almost all events 
$\omega \in \Omega$, and it is easily seen that $u_D \in 
L^2(\Omega, H^1_0(D))$. Moreover, owing to the standard 
elliptic regularity theory (see e.g.\ \cite{brezis}), it holds 
$u(\cdot,\omega) \in H^2(D)$ for almost all events $\omega
\in \Omega$.\\

\subsubsection{The Dirichlet energy as cost function}
\label{secdirlap}~\\

\noindent
The first functional under consideration is the mean 
value ${\mathcal M}(D)$ of the Dirichlet energy: 
$$ 
{\mathcal M}(D) = \int_\Omega{{\mathcal C}(D,\omega) 
\mathbb{P}(d\omega)}, \quad {\mathcal C}(D,\omega) 
= -\frac{1}{2}\int_D{\lvert\lvert \nabla u_D(x,\omega) \lvert\lvert^2 \:dx}.
$$
The result of interest is as follows.\\

\begin{theorem}\label{thdirlap}
The objective function ${\mathcal M}(D)$ can be rewritten as
\begin{equation}\label{mvdirlap}
{\mathcal M}(D) = -\frac{1}{2}\int_D{\langle \nabla 
\otimes \nabla \rangle \: \text{\rm Cor}(u_D)(x,x)\:dx}.
\end{equation}
This functional is shape differentiable over ${\mathcal U}_{ad}$ 
and its shape gradient is given by
\begin{equation}\label{eqgrlap}
 \forall \theta \in \Theta_{ad}, \:\: {\mathcal M}^\prime(D)(\theta) 
 = - \frac{1}{2}\int_{\partial D}{\left(\frac{\partial }{\partial n} \otimes 
 \frac{\partial }{\partial n}\right)\text{\rm Cor}(u_D)(x,x) \: (\theta \cdot n)(x) \:ds(x)}.
 \end{equation}
In the last two formulae, $\langle \nabla \otimes \nabla \rangle: 
H^1_0(D) \otimes H^1_0(D) \rightarrow L^2(D) \otimes L^2(D)$ 
and $(\frac{\partial }{\partial n} \otimes \frac{\partial }{\partial n} ): 
H^2(D) \otimes H^2(D) \rightarrow L^2(\partial D) \otimes L^2(\partial D)$ 
stand for the linear forms induced by the respective bilinear mappings 
$$ 
(u,v) \mapsto \nabla u \cdot \nabla v \text{ and } (u,v) \mapsto 
\frac{\partial u}{\partial n} \frac{\partial v }{\partial n}.
$$
The correlation function $\text{\rm Cor}(u_D) \in H^1_0(D) \times 
H^1_0(D) \simeq H^1_0(D\times D)$ can be obtained as the 
unique solution of the boundary value problem
\begin{equation}\label{syscor1}
\left\{ 
\begin{array}{ll}
-(\Delta \otimes \Delta) (\text{\rm Cor}(u_D)) = \text{\rm Cor}(f) & \mbox{in } D\times D, \\[1ex]
\phantom{-(\Delta \otimes \Delta)()}\text{\rm Cor}(u_D) = 0 & \mbox{on } \partial (D\times D).
\end{array}
\right.
\end{equation}
\end{theorem}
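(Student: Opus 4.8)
The plan is to reduce each of the three claims to its deterministic analogue by decomposing the random state along an orthonormal basis $\{\xi_i\}_{i\in\mathbb{N}}$ of $L^2(\Omega)$ and invoking the commutation relations of Lemma~\ref{lemcom} (together with their variants for derivatives sanctioned by Remark~\ref{remcom}). Throughout I write $u_D=\sum_i\xi_i\otimes u_i$ and $f=\sum_i\xi_i\otimes f_i$ for the decompositions furnished by Proposition~\ref{propbasis}, so that under the identification of Proposition~\ref{propident} one has $u_D(x,\omega)=\sum_i\xi_i(\omega)u_i(x)$, and likewise for $f$.

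I would establish the reformulation (\ref{mvdirlap}) first. Since $u_D\in L^2(\Omega,H^1_0(D))$, the map $\omega\mapsto\int_D\|\nabla u_D(x,\omega)\|^2\,dx$ lies in $L^1(\Omega)$, so Fubini's theorem applies and gives $\mathcal{M}(D)=-\tfrac12\int_D\big(\int_\Omega\|\nabla u_D(x,\omega)\|^2\,\mathbb{P}(d\omega)\big)\,dx$. It then suffices to identify the inner integral with $\langle\nabla\otimes\nabla\rangle\,\mathrm{Cor}(u_D)(x,x)$. By Definition~\ref{defcor} one has $\mathrm{Cor}(u_D)=\sum_i u_i\otimes u_i$, whence $\langle\nabla\otimes\nabla\rangle\,\mathrm{Cor}(u_D)(x,x)=\sum_i\|\nabla u_i(x)\|^2$; on the other hand $\int_\Omega\|\nabla u_D(x,\omega)\|^2\,\mathbb{P}(d\omega)=\sum_{i,j}\big(\int_\Omega\xi_i\xi_j\big)\,\nabla u_i(x)\cdot\nabla u_j(x)=\sum_i\|\nabla u_i(x)\|^2$ by orthonormality of $\{\xi_i\}$. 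This is precisely the commutation of $\gamma_c\circ\mathrm{Cor}$ with $\int_\Omega\circ\,\gamma$ from Lemma~\ref{lemcom}, transported from the map $(u,v)\mapsto uv$ to the map $(u,v)\mapsto\nabla u\cdot\nabla v$.

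For the characterization (\ref{syscor1}) I would use the same decomposition. Testing the almost-sure identity $-\Delta u_D(\cdot,\omega)=f(\cdot,\omega)$ against each $\xi_i$ in $L^2(\Omega)$ shows that every mode solves the deterministic problem $-\Delta u_i=f_i$ in $D$ with $u_i\in H^1_0(D)$. Since the tensorized operator is determined by its action on pure tensors, $(-\Delta)\otimes(-\Delta)$ sends $u_i\otimes u_i$ to $(-\Delta u_i)\otimes(-\Delta u_i)=f_i\otimes f_i$; summing over $i$ and using continuity of the operator yields the equation of (\ref{syscor1}), namely $(-\Delta)\otimes(-\Delta)\,\mathrm{Cor}(u_D)=\sum_i f_i\otimes f_i=\mathrm{Cor}(f)$. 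The homogeneous boundary condition is inherited from the modes: each $u_i$ vanishes on $\partial D$, so every pure tensor $u_i\otimes u_i$ vanishes on $\partial(D\times D)=(\partial D\times\overline D)\cup(\overline D\times\partial D)$, and hence so does $\mathrm{Cor}(u_D)$. Uniqueness in $H^1_0(D)\otimes H^1_0(D)\simeq H^1_0(D\times D)$ follows by tensorizing the isomorphism $-\Delta\colon H^1_0(D)\to H^{-1}(D)$, as in \cite{ST0,ST}.

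The shape gradient (\ref{eqgrlap}) is where the genuine difficulty lies. For fixed $\omega$, the quantity $\mathcal{C}(D,\omega)=-\tfrac12\int_D\|\nabla u_D(\cdot,\omega)\|^2\,dx$ is the classical Dirichlet energy, whose shape derivative is the standard boundary expression $\mathcal{C}'(D,\omega)(\theta)=-\tfrac12\int_{\partial D}\big(\tfrac{\partial u_D}{\partial n}(x,\omega)\big)^2(\theta\cdot n)(x)\,ds(x)$; this rests on elliptic regularity ($u_D(\cdot,\omega)\in H^2(D)$), on the identity $\nabla u_D=\tfrac{\partial u_D}{\partial n}\,n$ along $\partial D$, and on the self-adjoint structure, whereby the contribution of the material derivative of the state cancels. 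Integrating over $\Omega$ and invoking once more the commutation relation of Lemma~\ref{lemcom}, now for the boundary map $(u,v)\mapsto\tfrac{\partial u}{\partial n}\tfrac{\partial v}{\partial n}$, identifies $\int_\Omega\big(\tfrac{\partial u_D}{\partial n}(x,\omega)\big)^2\,\mathbb{P}(d\omega)$ with $\big(\tfrac{\partial}{\partial n}\otimes\tfrac{\partial}{\partial n}\big)\mathrm{Cor}(u_D)(x,x)$ and produces (\ref{eqgrlap}). The main obstacle is to justify differentiating under the expectation, that is, to upgrade the pointwise-in-$\omega$ expansion into Fr\'echet differentiability of $\theta\mapsto\mathcal{M}(D_\theta)$ at $\theta=0$. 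Here the linear dependence of the state on the datum is decisive: the solution operator $f(\omega)\mapsto u_{D_\theta}(\cdot,\omega)$ and its derivative in $\theta$ are bounded uniformly in $\omega$, so the remainder in the expansion of $\mathcal{C}(D_\theta,\omega)$ is dominated by $\|f(\omega)\|_{L^2}^2\,o(\theta)$ with $o(\theta)$ uniform in $\omega$; since $\omega\mapsto\|f(\omega)\|_{L^2}^2$ belongs to $L^1(\Omega)$ by assumption, dominated convergence legitimates the interchange and delivers the stated gradient.
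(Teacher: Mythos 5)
Your proposal is correct and follows essentially the same route as the paper: express the integrand via the tensorized bilinear maps, pass the expectation through using the commutation relations of Lemma \ref{lemcom}, invoke the classical deterministic shape-derivative formula for the Dirichlet energy at fixed $\omega$, and obtain (\ref{syscor1}) by tensorizing the state equation with uniqueness coming from the tensorized isomorphism as in \cite{ST0}. Your explicit modal decomposition $u_D=\sum_i \xi_i\otimes u_i$ and the dominated-convergence argument legitimating differentiation under the expectation are simply more concrete renderings of steps the paper performs abstractly or leaves implicit.
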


\begin{proof}
We find
\begin{align*}
{\mathcal M}(D) &= -\frac{1}{2}\int_\Omega{\int_{D}
{\lvert \lvert \nabla u_D(x,\omega) \lvert\lvert^2 \:dx}\,\mathbb{P}(d\omega)}\\
&= -\frac{1}{2}\int_\Omega{\int_{D}{ \gamma(\langle \nabla \otimes \nabla \rangle 
(u_D(\cdot,\omega) \otimes u_D(\cdot , \omega) )) (x,x) \:dx}\,\mathbb{P}(d\omega)}.
\end{align*}
Now using Lemma \ref{lemcom} (see also Remark 
\ref{remcom}), we obtain
\begin{align*}
{\mathcal M}(D) &= -\frac{1}{2}\int_D{ \gamma_c\left(\int_{\Omega}
{\langle \nabla \otimes \nabla \rangle (u_D(\cdot,\omega) \otimes 
u_D(\cdot , \omega) ) (x,x) \:\mathbb{P}(d\omega)} \right) \:dx}\\
&= -\frac{1}{2}\int_{D}{ \gamma_c\left((\nabla \otimes \nabla )
\left(\int_\Omega{ u_D \otimes  u_D \:\mathbb{P}(d\omega) }\right)\right)(x,x) \:dx},
\end{align*}
which implies the desired expression (\ref{mvdirlap}). 

To prove (\ref{eqgrlap}), we follow the same analysis, 
starting from the classical formula for the shape derivative, 
for a given event $\omega \in \Omega$ (see e.g.\ \cite{Henrot}): 
$$
{\mathcal C}^\prime(D,\omega)(\theta) = - \frac{1}{2}\int_{\partial D}
{\left \lvert \frac{\partial  u_D(\cdot , \omega)}{\partial n} \right \lvert^2\theta \cdot n\:ds}.
$$

Finally, that $u_D$ is the unique solution to the system (\ref{syscor1}) 
follows from tensorizing the state equation (\ref{lap}) and applying the 
usual Lax-Milgram theory (again, see \cite{ST0} for details).
\end{proof}

\subsubsection{$L^2$-tracking type cost function}
\label{sectracking}~\\

\noindent
Still in the setting of the Poisson equation outlined above, 
we are now interested in the $L^2$-tracking type cost function
$$ 
{\mathcal C}(D,\omega) = \frac{1}{2}\int_B{\lvert  u_D(x,\omega) 
- u_0(x) \lvert^2 \:dx},
$$
where $u_D$ is the solution to (\ref{lap}), $B\Subset D$ is a 
fixed subset of $D$, and $u_0 \in L^2(B)$ is a prescribed function.
Again, we aim at minimizing the mean value of the cost ${\mathcal C}$,
that is
$$ 
{\mathcal M}(D) = \int_\Omega{{\mathcal C}(D,\omega)\:\mathbb{P}(d\omega)}.
$$

The main result in the present context is the following theorem:

\begin{theorem}\label{thlslap}
The functional ${\mathcal M}(D)$ defined above can 
be rewritten as
\begin{equation}\label{eqlslap}
{\mathcal M}(D) = \frac{1}{2}\int_D{\left(\text{\rm Cor}(u_D)(x,x) 
- 2 u_0(x) \mathbb{E}(u_D)(x) + u_0^2(x) \right)dx}.
\end{equation}
It is shape differentiable at any shape $D\in {\mathcal U}_{ad}$
with shape derivative given by
\begin{equation}\label{eqgrls}
 \forall \theta \in \Theta_{ad}, \:\: {\mathcal M}^\prime(D)(\theta) = 
 - \int_{\partial D}{\left( \frac{\partial}{\partial n} \otimes \frac{\partial}{\partial n} \right)
 \text{\rm Cor}(p_D,u_D)(x,x) (\theta \cdot n)(x)\:ds(x)}.
\end{equation}
In this formula, the adjoint state $p_D \in L^2(\Omega) \otimes 
H^1_0(D)$ satisfies the boundary value problem
\begin{equation}\label{eqadjlap}
\text{a.e.}\ \omega \in \Omega, \:\:\left\{ 
\begin{array}{ll}
- \Delta p(\cdot,\omega) = -\chi_B (u_D(\cdot,\omega)- u_0) & \mbox{in } D,\\[1ex]
\phantom{- \Delta}p(\cdot,\omega) = 0 & \mbox{on } \partial D,
\end{array}
\right.
\end{equation}
where $\chi_B$ stands for the characteristic function of $B$. 
Moreover, the mean value $\mathbb{E}(u_D) \in H^1_0(D)$ 
which enters (\ref{eqlslap}) is the unique solution of
\begin{equation}\label{eqmvlap}
 \left\{
\begin{array}{ll}
- \Delta \mathbb{E}(u) = \mathbb{E}(f) & \mbox{in } D, \\[1ex]
\phantom{- \Delta }\mathbb{E}(u) = 0 & \mbox{on } \partial D,
\end{array}
\right.
\end{equation}
the two-point correlation $\text{\rm Cor}(u_D)$ is the solution 
of (\ref{syscor1}), and the correlation function $\text{\rm Cor}
(p_D,u_D) \in H^1_0(D) \otimes H^1_0(D)$ can be calculated 
by solving the boundary value problem
\begin{equation}\label{corup}
\left\{ 
\begin{array}{ll}
- (\Delta \otimes I) \text{\rm Cor}(p,u) = -(\chi_B \otimes I)
(\text{\rm Cor}(u) - u_0 \otimes \mathbb{E}(u)) & \mbox{in } D \times D, \\[1ex]
\phantom{-(\Delta \otimes I)}\text{\rm Cor}(p,u) = 0 & \mbox{on } \partial (D \times D).
\end{array}
\right.
\end{equation}
\end{theorem}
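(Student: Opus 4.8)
The plan is to mirror closely the proof of Theorem \ref{thdirlap} and the finite-dimensional derivation of Section \ref{section:idea}, the only new ingredients being the linear tracking term and its adjoint. For the rewriting (\ref{eqlslap}), I would first expand the integrand pointwise, $|u_D(x,\omega)-u_0(x)|^2 = u_D(x,\omega)^2 - 2u_0(x)u_D(x,\omega) + u_0(x)^2$, then integrate over $\Omega$ and use Tonelli to exchange the order of integration. Writing $u_D(x,\omega)^2 = \gamma\big(u_D(\cdot,\omega)\otimes u_D(\cdot,\omega)\big)(x)$ and invoking the commutation diagram of Lemma \ref{lemcom} (together with Remark \ref{remcom}), the mean of the quadratic term becomes $\gamma_c(\text{\rm Cor}(u_D))(x)=\text{\rm Cor}(u_D)(x,x)$. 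Since $u_0$ is deterministic, the mean of the cross term is $u_0(x)\mathbb{E}(u_D)(x)$ and the last term is unchanged, which yields (\ref{eqlslap}).

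For the shape derivative, I would first freeze the event $\omega$. Because $B\Subset D$ is fixed and does not move with $D$, the only contribution to $\mathcal{C}^\prime(D,\omega)(\theta)$ comes from the dependence of $u_D$ on the domain. Introducing the adjoint state $p_D(\cdot,\omega)$ defined by (\ref{eqadjlap}), the classical Hadamard/C\'ea computation for a Dirichlet tracking-type functional gives the per-event shape derivative $\mathcal{C}^\prime(D,\omega)(\theta) = -\int_{\partial D}\frac{\partial p_D(\cdot,\omega)}{\partial n}\frac{\partial u_D(\cdot,\omega)}{\partial n}(\theta\cdot n)\,ds$, where the elliptic $H^2$-regularity recalled after (\ref{lap}) ensures the normal derivatives are well-defined in $L^2(\partial D)$. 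I would then take the expectation and, exactly as in the first part, use Lemma \ref{lemcom} to replace $\int_\Omega \frac{\partial p_D}{\partial n}\frac{\partial u_D}{\partial n}\,\mathbb{P}(d\omega)$ by $\left(\frac{\partial}{\partial n}\otimes\frac{\partial}{\partial n}\right)\text{\rm Cor}(p_D,u_D)(x,x)$, giving (\ref{eqgrls}).

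It then remains to identify the data as solutions of the stated boundary value problems. Equation (\ref{eqmvlap}) follows by applying the bounded linear operator $\mathbb{E}$ to the state equation (\ref{lap}) and using that it commutes with $-\Delta$; equation (\ref{syscor1}) is exactly what was already established in Theorem \ref{thdirlap}. For (\ref{corup}) I would tensorize as in (\ref{eqcovup}): applying $(-\Delta\otimes I)$ to the pure tensor $p_D(\cdot,\omega)\otimes u_D(\cdot,\omega)$ and using the adjoint equation (\ref{eqadjlap}) pointwise in $\omega$ produces $-(\chi_B\otimes I)\big(u_D(\cdot,\omega)\otimes u_D(\cdot,\omega) - u_0\otimes u_D(\cdot,\omega)\big)$; taking the correlation (i.e.\ summing over an orthonormal basis of $L^2(\Omega)$, and using that $u_0$ is deterministic so that $\int_\Omega u_0\otimes u_D\,\mathbb{P}(d\omega)=u_0\otimes\mathbb{E}(u_D)$) turns the right-hand side into $-(\chi_B\otimes I)(\text{\rm Cor}(u_D)-u_0\otimes\mathbb{E}(u_D))$, and well-posedness of the tensorized system follows from Lax--Milgram on the tensor-product space, as in \cite{ST0}.

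The main obstacle is not any single algebraic identity — each mirrors the finite-dimensional computation of Section \ref{section:idea} — but the rigorous justification that shape differentiation commutes with the expectation $\int_\Omega\cdot\,\mathbb{P}(d\omega)$, so that $\mathbb{E}$ of the per-event derivative is indeed the derivative of $\mathbb{E}$. This requires a domination argument: a bound on the remainder $o(\theta)$ in the per-event asymptotic expansion of $\mathcal{C}(D_\theta,\omega)$ that is integrable in $\omega$, allowing the expansion to pass through the expectation. This is precisely where the $L^2(\Omega,H^1_0(D))$ integrability of $u_D$ and the uniform elliptic $H^2$-estimates enter. The remaining care is to confirm the various commutation relations ($\gamma_c$, $\mathbb{E}$, and the differential and trace operators against $\text{\rm Cor}$) in the tensor-product setting, all of which are furnished by Lemma \ref{lemcom} and Remark \ref{remcom}.
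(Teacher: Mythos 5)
Your proposal is correct and follows essentially the same route as the paper: the paper's own proof states that the argument is ``essentially identical'' to that of Theorem \ref{thdirlap} — per-event classical shape calculus for the tracking functional with adjoint (\ref{eqadjlap}), followed by taking expectations via the commutation Lemma \ref{lemcom} — and then records precisely the tensorization computation you give to derive (\ref{corup}), including the identification $\int_\Omega u_0(x)\,u_D(y,\omega)\,\mathbb{P}(d\omega) = u_0(x)\,\mathbb{E}(u_D)(y)$. The only notable difference is that you are more explicit than the paper about justifying the interchange of shape differentiation with the expectation (a domination argument for the per-event remainder), a point the paper leaves implicit.
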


\begin{proof}
The proof is essentially identical to that of Theorem \ref{thdirlap}
and hence is not repeated here. Nevertheless, let us just 
point out that (\ref{corup}) stems from the computation
\begin{align*}
-(\Delta \otimes I) \text{\rm Cor}(p_D,u_D)(x,y) 
&= \int_{\Omega}{ -\Delta p_D(x,\omega) u_D(y,\omega) \mathbb{P}(d\omega)}\\
&=- \chi_B(x) \int_{\Omega}{ (u_D(x,\omega) - u_0(x)) u_D(y,\omega) \mathbb{P}(d\omega)}\\ 
&= -\chi_B(x) \left(\text{\rm Cor}(u)(x,y) - u_0(x) \mathbb{E}(u_D)(y) \right)
\end{align*} 
for almost all $(x,y) \in D \times D$.
\end{proof}\par
\medskip

\subsection{Quadratic functionals in the context of linear elasticity}
\label{secelas}~\\

\noindent
We now slip into the context of the linear elasticity system. 
The shapes $D \subset \mathbb{R}^d$ under consideration are 
filled with a linear elastic material with Hooke's law $A$ given by
$$ 
\forall e \in {\mathcal S}(\mathbb{R}^d), \:\: 
Ae = 2 \mu e + \lambda\: \text{\rm tr}{e} \:I,
$$
where the \textit{Lam\'e coefficients} $\lambda$ and $\mu$ 
satisfy $\mu>0$ and $\lambda + 2\mu/d>0$.

The admissible shapes $D \in {\mathcal U}_{ad}$ are clamped 
on a fixed subset $\Gamma_D$ of their boundaries, surface loads 
are applied on another fixed, disjoint part $\Gamma_N \subset 
\partial D$, so that only the \textit{free boundary} $\Gamma := 
\partial D \setminus (\Gamma_D \cup \Gamma_N)$ is subject 
to optimization. Accordingly, we shall assume that all the 
deformation fields $\theta \in \Theta_{ad}$ vanish on $\Gamma_D 
\cup \Gamma_N$. Omitting body forces for simplicity, the displacement 
$u_D$ of $D$ belongs to the space $[H^1_{\Gamma_D}(D)]^d$, where
$$ 
H^1_{\Gamma_D}(D) = \left\{ u \in H^1(D), 
\text{ s.t. } u = 0 \text{ on } \Gamma_D \right\},
$$
and is the unique solution in this space to the boundary 
value problem
\begin{equation}\label{eq:elasticity}
\left\{ 
\begin{array}{rl}
-\text{\rm div}(Ae(u)) = 0 & \text{in } D, \\[1ex]
u = 0 & \text{on } \Gamma_D, \\[1ex]
Ae(u)n = g & \text{on } \Gamma_N, \\[1ex]
Ae(u)n = 0 & \text{on } \Gamma,
\end{array}
\right.
\end{equation}
where $e(u) = (\nabla u + \nabla u ^T)/2$ stands 
for the linearized strain tensor.

The cost function at stake is the \textit{compliance} of shapes
$$ 
{\mathcal C}(D,\omega) = \int_D{Ae(u_D)(x,\omega):e(u_D)(x,\omega)\:dx} 
= \int_{D}{g(x,\omega)\cdot u_D(x,\omega) \:ds(x)},
$$
and we still aim at optimizing its mean value ${\mathcal M}(D) 
= \int_\Omega{{\mathcal C}(D,\omega)\:\mathbb{P}(d\omega)}$.
Arguing as in the previous subsection, we obtain the following result.\\

\begin{theorem}\label{thelas}
The above functional ${\mathcal M}(D)$ can be rewritten
in accordance with
$$ 
{\mathcal M}(D)  = \int_D {((Ae_x : e_y)\text{\rm Cor}(u))(x,x) \:dx}, 
$$
where $(Ae_x : e_y) : [H^1_{\Gamma_D}(D)]^d \otimes [H^1_{\Gamma_D}(D)]^d 
\rightarrow L^2(D) \otimes L^2(D)$ is the linear operator induced 
by Proposition \ref{propuniv} from the bilinear mapping
$$ 
(u,v) \mapsto Ae(u):e(v).
$$
This functional is differentiable at any shape $D \in {\mathcal U}_{ad}$
and its derivative reads
$$ 
\forall \theta \in \Theta_{ad}, \:\: {\mathcal M}^\prime(D)(\theta) 
= -\int_{\Gamma}{((Ae_x :e_y)\text{\rm Cor}(u))(x,x) (\theta\cdot n)(x)\,ds(x)}.
$$
Here, the two-point correlation function $\text{\rm Cor}(u) \in 
[H^1_{\Gamma_D}(D)]^d \otimes [H^1_{\Gamma_D}(D)]^d$ 
is the unique solution to the following boundary value problem:
$$
\left\{ \begin{aligned}
  (\div_{x}\otimes\div_{ y})({A}e_{x}\otimes {A}e_{y})\Cor(u) &= 0 &&\text{in } D\times D,\\
    \Cor(u) &= 0 \ &&\text{on $\Gamma_D\times\Gamma_D$},\\
   (\div_x \otimes I_{y})(Ae_x \otimes I_y )\Cor(u) &= 0 &&\text{on } D\times\Gamma_D,\\
  (I_x \otimes \div_{y})({I}_{x}\otimes {A}e_{y})\Cor(u) &= 0 &&\text{on } \Gamma_D\times D,\\
    (Ae_{x}\otimes {A}e_{y})\Cor(u)({n}_{x}\otimes{n}_{y}) &= \Cor(g)  &&\text{on } \Gamma_N\times\Gamma_N,\\
  (\div_{x}\otimes {I}_{y})({A}e_{x}\otimes {A}e_{y})\Cor(u) ({I}_{x}\otimes {n}_{y})  &= 0 &&\text{on } D\times(\Gamma_N\cup\Gamma),\\
  ({I}_{x}\otimes \div_{y})({A}e_{x}\otimes {A}e_{y})\Cor(u) ({n}_{x}\otimes {I}_{y}) &= 0 &&\text{on } (\Gamma_N \cup\Gamma_N)\times D,\\
  (Ae_{x}\otimes {A}e_{y})\Cor(u)({n}_{x}\otimes{n}_{y})  &= 0  &&\text{on }\big((\Gamma_N \cup\Gamma )
  	\times (\Gamma_N \cup\Gamma )\big)\setminus(\Gamma_N \times\Gamma_N),\\
  ({A}e_{x}\otimes {I}_{y})\Cor(u) ({n}_{x}\otimes {I}_{y}) &= 0  &&\text{on } (\Gamma_N \times\Gamma)\times\Gamma_D,\\
  ({I}_{x}\otimes {A}e_{y})\Cor(u)({I}_{x}\otimes {n}_{y}) &= 0 &&\text{on } \Gamma_D\times (\Gamma_N \times\Gamma).\\
\end{aligned} \right.
$$
\end{theorem}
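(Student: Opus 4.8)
The plan is to follow the blueprint of the proofs of Theorems \ref{thdirlap} and \ref{thlslap}, handling the three assertions in turn and reserving the real effort for the tensorized boundary value problem characterizing $\Cor(u)$. For the reformulation of $\mathcal{M}(D)$, I would write the integrand $Ae(u_D)(x,\omega):e(u_D)(x,\omega)$ as $\gamma\bigl((Ae_x:e_y)(u_D(\cdot,\omega)\otimes u_D(\cdot,\omega))\bigr)(x,x)$, where $(Ae_x:e_y)$ is the linear operator furnished by Proposition \ref{propuniv} from the bilinear map $(u,v)\mapsto Ae(u):e(v)$, and then invoke the commutation diagram of Lemma \ref{lemcom} (in its vector-valued variant flagged in Remark \ref{remcom}) to interchange the expectation $\int_\Omega$ with the diagonal trace $\gamma_c$ and with the linear operator $Ae_x:e_y$. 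This converts $\int_\Omega\int_D Ae(u_D):e(u_D)\,dx\,\mathbb{P}(d\omega)$ into $\int_D((Ae_x:e_y)\Cor(u))(x,x)\,dx$, exactly as in Theorem \ref{thdirlap}.

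For the shape derivative, I would start from the classical Hadamard formula for the compliance at a fixed event $\omega$. Since the compliance is self-adjoint — its adjoint state coincides, up to sign, with the state $u_D$ itself, so that no genuine adjoint is needed, which is precisely why the derivative involves only $\Cor(u)$ and not a cross-correlation as in Theorem \ref{thlslap} — the fixed-$\omega$ derivative reads $\mathcal{C}'(D,\omega)(\theta)=-\int_\Gamma Ae(u_D(\cdot,\omega)):e(u_D(\cdot,\omega))\,(\theta\cdot n)\,ds$, the boundary integral being supported on the free boundary $\Gamma$ alone because $\theta$ vanishes on $\Gamma_D\cup\Gamma_N$. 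Integrating over $\Omega$ and applying the same commutation argument then yields the stated formula.

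The heart of the matter — and the main obstacle — is the derivation of the tensor-product boundary value problem for $\Cor(u)$. The strategy is to tensorize the strong form (\ref{eq:elasticity}) in the two independent space variables $x$ and $y$: for almost every $\omega$, the field $u_D(x,\omega)\otimes u_D(y,\omega)$ satisfies the ``squared'' system obtained by imposing, in each variable separately, one of the three conditions $u=0$ (on $\Gamma_D$), $Ae(u)n=g$ (on $\Gamma_N$), or $Ae(u)n=0$ (on $\Gamma$). Because $\partial D=\Gamma_D\cup\Gamma_N\cup\Gamma$, the product boundary $\partial(D\times D)$ decomposes into the numerous faces listed in the statement, each carrying the tensor product of the two corresponding boundary operators. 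Integrating over $\Omega$ then commutes every linear operator past the correlation: the interior relation $\div(Ae(u))=0$ becomes $\Cor(\div(Ae(u)),\div(Ae(u)))=0$; the loaded trace produces $g\otimes g$, hence $\Cor(g)$, on $\Gamma_N\times\Gamma_N$; and any face carrying a homogeneous Neumann factor yields a correlation with the zero field, hence $0$, which is exactly why the loaded right-hand side survives only on $\Gamma_N\times\Gamma_N$ and all other mixed faces vanish.

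The delicate points are the exhaustive bookkeeping of the mixed faces and, above all, the well-posedness of this tensorized mixed system. Unlike the pure-Dirichlet Laplacian of Theorem \ref{thdirlap}, coercivity here rests on Korn's inequality on $[H^1_{\Gamma_D}(D)]^d$, so one must verify that the tensor product of two well-posed mixed elasticity problems is again well-posed. I expect this to follow, as in the scalar case, by tensorizing the Lax--Milgram argument — equivalently, by observing that the tensor product $\mathcal{L}\otimes\mathcal{L}$ of the boundedly invertible solution operator $\mathcal{L}$ of (\ref{eq:elasticity}) with itself is boundedly invertible on $[H^1_{\Gamma_D}(D)]^d\otimes[H^1_{\Gamma_D}(D)]^d$, the right-hand side $\Cor(f)$ being replaced here by the boundary datum $\Cor(g)$; I would refer to \cite{ST0} for the tensor-product elliptic theory underlying this step.
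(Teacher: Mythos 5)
Your proposal is correct and follows essentially the same route as the paper, which in fact gives no separate proof of Theorem \ref{thelas} beyond the phrase ``arguing as in the previous subsection'': namely, rewriting $\mathcal{M}(D)$ via Lemma \ref{lemcom} and Proposition \ref{propuniv}, integrating the classical fixed-event Hadamard formula for the compliance (self-adjoint, so no cross-correlation appears), and obtaining the tensorized mixed boundary value problem by tensorizing the state equation \eqref{eq:elasticity} with well-posedness delegated to the tensor-product Lax--Milgram theory of \cite{ST0}. Your added remarks on Korn's inequality and the bounded invertibility of $\mathcal{L}\otimes\mathcal{L}$ simply make explicit what the paper leaves implicit.
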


\begin{remark}
All the involved mappings in the foregoing expressions 
are naturally produced by Proposition \ref{propuniv}, and 
we do not make the underlying functional spaces explicit. 
The subscripts $_x$ and $_y$ refer to operators acting 
respectively on the first and second component of a pure 
tensor in a tensor product space.
\end{remark}

\section{Numerical realization}\label{secnumreal}
\label{section:implementation}
In this section, we now focus on how the previous formulae 
for the objective functions of interest and their derivatives 
pave the way to efficient calculations in numerical practice.\\

\subsection{Computing second moments}\label{low-rank}~\\

\noindent
Without loss of generality, we focus the discussion on the 
setting of the Poisson equation, as discussed in Section 
\ref{secilluslap}. The expressions (\ref{mvdirlap},\ref{eqgrlap},%
\ref{eqlslap},\ref{eqgrls}) involve the mean value $\mathbb{E}(u)$ 
and the correlation $\text{\rm Cor}(u)$ of the solution $u(\cdot, 
\omega)$ to (\ref{lap}) and the correlation $\text{\rm Cor}(u,p)$ 
between $u$ and the solution $p(\cdot, \omega)$ to (\ref{eqadjlap}). 

The quantity $\mathbb{E}(u)$ is fairly straightforward to calculate 
once the mean of the data $\mathbb{E}(f)$ is known; indeed, it arises 
as the solution to the boundary value problem (\ref{eqmvlap})
which can be solved owing to any standard finite element method. 
 
It is however more complicated to compute $\text{\rm Cor}(u)$ (or 
$\text{\rm Cor}(u,p)$) since, in accordance with \eqref{syscor1}, a 
fairly unusual boundary value problem for the tensorized Laplace 
operator needs to be solved on the product domain $D\times D$. 
This moderately high-dimensional problem can be solved in essentially 
the same complexity as (\ref{eqmvlap}) if a sparse tensor product 
discretization is employed as proposed in e.g.~\cite{H2,HPS2,HSS}. 
However, the implementation of this approach is highly intrusive, 
insofar as it demands a dedicated solver.

One way to get past this difficulty, which is also much simpler to 
implement, consists in relying on an expansion of the two-point 
correlation function of $f(\cdot,\omega)$ of the form
\begin{equation}    \label{eq:low-rank}
  \Cor(f) = \sum_k f_k\otimes f_k.
\end{equation}
Then, the two-point correlation function $\Cor(u)$ can 
be expressed as
\begin{equation}    \label{eq:low-rank u}
  \Cor(u) = \sum_k u_k\otimes u_k,
\end{equation}
where each function $u_k$ is the solution to the Poisson 
equation (\ref{lap}) with data $f_k$. In other terms, the 
solution's two-point correlation function can be determined 
from solving (possibly infinite) standard boundary value 
problems. In practice, the expansion \eqref{eq:low-rank} 
is truncated so that this process becomes feasible.

Several possibilities are available when it comes to 
decomposing $\text{\rm Cor}(f)$ as in (\ref{eq:low-rank}).
For example, in the situation that $\Cor(f)\in L^2(D\times D)$, 
the most natural idea is to perform a spectral decomposition, 
as an application of Mercer's theorem
\begin{equation}    \label{eq:spectral_decomp}
\Cor(f)=\sum_k\lambda_k (\phi_k\otimes\phi_k),
\end{equation}
where $(\lambda_k,\phi_k)$ are the eigenpairs
of the associated Hilbert-Schmidt operator
$$ 
L^2(D) \ni \phi \mapsto \int_D{\text{\rm Cor}(f)(\cdot ,y) \phi(y)\:dy} \in L^2(D).
$$

Another way to obtain the decomposition \eqref{eq:low-rank}
is a (possibly infinite) Cholesky decomposition of the two-point 
correlation function. Pivoting the Cholesky decomposition yields 
an extremely efficient approximation method, see e.g.~\cite{H2,HPS1}.\\

\subsection{Numerical calculation of a low-rank
approximation of $\text{\rm Cor}(f)$}\label{cholesky}~\\

\noindent
In general, the expansion \eqref{eq:low-rank} is infinite 
and has to be appropriately truncated for numerical 
computations. Let $V :=\spn\{\varphi_i:i=1,2,\ldots,N\}
\subset L^2(D)$ be a suitable discretization of $L^2(D)$, 
e.g.\ a finite element space associated with a mesh of $D$. 
We are looking for a low-rank approximation of $\text{\rm Cor}(f)$ 
in the tensor product space $V\otimes V$:
\begin{equation}\label{eq:low-rank-covariance}
  \Cor(f)( x,y) \approx \sum_{k=1}^m\bigg(\sum_{i=1}^n\ell_{k,i}\varphi_i({x})\bigg)
  	\bigg(\sum_{j=1}^n\ell_{k,j}\varphi_j({ y})\bigg)\in V\otimes V .
\end{equation}
The unknown coefficient vectors in \eqref{eq:low-rank-covariance}
can be computed as follows. Define the discrete correlation matrix 
$C \in \mathbb{R}^{N^2}$ as
\[
 C_{i,j} = \int_D\int_D\Cor(f)({x},{y})
  	\varphi_i({ x})\varphi_j({y}) \:dxdy, \:\: i,j = 1,\ldots,N.
\]
and the mass matrix $G \in \mathbb{R}^{N^2}$ as
\[
  G_{i,j} = \int_{\partial D}{\varphi_i(x)\varphi_j(x)dx}, \:\: i,j=1,\ldots,N.
\]
Then, it is easily seen that searching for a decomposition of 
the form \eqref{eq:low-rank-covariance} translates, in terms of 
matrices, into the search of an approximation $C_m$ such that
\[
   C\approx C_m=\sum_{k=1}^m\widetilde{\ell}_k\widetilde{\ell}_k^T
  \quad \text{with}\quad \ell_k = (\ell_{k,i})_{i=1,\ldots,N} = G^{-1}\widetilde{\ell}_k,
\]
in such a way that the truncation error $\|{ C}-{C}_m\|$ is rigorously 
controlled (in a way yet to be defined).
 
The best low-rank approximation in $L^2(D\times D)$ is known
to be the truncated spectral decomposition \eqref{eq:spectral_decomp} 
(see e.g.\ \cite{ST}). In the discrete setting, this corresponds to the 
spectral decomposition of ${C}$, which is a very demanding task. 
In particular, the decay of the eigenvalues $\{\lambda_k\}$ and 
thus the rank $m$ to be reached for an accurate decomposition
depend heavily on the smoothness of the underlying two-point 
correlation function $\Cor(f)$. Related decay rates have been 
proved in \cite{ST}.

We suggest instead to employ the pivoted Cholesky decomposition 
in order to compute a discrete low-rank approximation of $\Cor(f)$, 
as originally proposed in \cite{HPS1}. It is a purely algebraic
approach which is quite simple to implement. It produces a 
low-rank approximation of the matrix ${C}$ for any given precision 
$\varepsilon > 0$ where the approximation error is rigorously controlled 
in the trace norm. A rank-$m$ approximation is computed in 
$\mathcal{O}(m^2 n)$ operations. Exponential convergence rates 
in $m$ hold under the assumption that the eigenvalues of ${C}$ 
exhibit a sufficiently fast exponential decay, see \cite{HPS1} for 
details. Nevertheless, numerical experiments suggest that, in 
general, the pivoted Cholesky decomposition converges optimally 
in the sense that the rank $m$ is uniformly bounded with respect 
to the truncation error $\varepsilon$ by the number of terms 
required for the spectral decomposition of $\Cor(f)$ to get 
the same error $\varepsilon$.\\

\subsection{Low-rank approximation of the shape functional 
and its gradient}\label{lrsf}~\\

\noindent
The basic idea is now to insert the state's expansion 
\eqref{eq:low-rank u} into the expressions of the expectation of the random 
shape functional and the associated shape gradient to
derive computable expressions. In fact, it turns out that
only standard solvers for boundary value problems need
to be provided. Loosely speaking, this implies that, if one can compute the 
shape functional and its gradient for a deterministic
right hand side, then one can also evaluate the 
expectation of the shape functional and its gradient
for a random right-hand side. We illustrate this idea
with the three examples introduced in Section \ref{sec:illustration}:
\begin{itemize}
\item
Having decomposed $\Cor(u)$ as $\Cor(u) = \sum_k 
u_k\otimes u_k$, with each function $u_k$ satisfying
\begin{equation}\label{equk}
  \left\{ \begin{array}{ll} 
  -\Delta u_k = f_k &  \text{in } D, \\[1ex]
  \phantom{-\Delta}u_k = 0 & \text{on }\partial D,
\end{array}	
\right.
\end{equation}
the mean value \eqref{mvdirlap} of the Dirichlet energy 
can be computed as:
$$
{\mathcal M}(D) = -\frac{1}{2}\sum_k \int_D { \|\nabla u_k\|^2 \:dx}.
$$
Moreover, its shape derivative is given by
$$  
\forall \theta \in \Theta_{ad}, \:\: {\mathcal M}^\prime(D)(\theta) 
= - \frac{1}{2}\int_{\partial D}{ \left( \sum_k{\left\lvert\frac{\partial u_k}
{\partial n} \right\lvert^2} \right)\theta \cdot n \:ds(x)}. 
$$
\item
The mean value \eqref{eqlslap} of the $L^2$-tracking 
type functional considered in Section \ref{sectracking} 
can be computed as
$$
{\mathcal M}(D) = \frac{1}{2}\int_B{\sum_k \big(u_k-u_0\big)^2\:dx}
$$
with the $u_k$ given by (\ref{equk}). As for the calculation of 
the shape gradient \eqref{eqgrls}, we have to introduce the 
adjoint states $p_k \in H^1_0(D)$, defined by
$$
  \left\{ \begin{array}{ll} 
  -\Delta p_k = -(u_k- u_0) &  \text{in } D, \\[1ex]
  \phantom{-\Delta} p_k = 0 & \text{on }\partial D,
\end{array}	
\right.
$$
Thus, in view of the fact that $\Cor(p_D,u_D) = \sum_k p_k
\otimes u_k$, we are led to the following formula for the shape 
gradient:
$$  
\forall \theta \in \Theta_{ad}, \:\: {\mathcal M}^\prime(D)(\theta) 
=  -\int_{\partial D}{ \left( \sum_k{\frac{\partial u_k}{\partial n} 
\frac{\partial p_k}{\partial n} } \right)\theta \cdot n \:ds(x)}. 
$$
\item 
Last but not least, in the linear elasticity setting of Section 
\ref{secelas}, expanding the correlation function of the surface 
loads $\text{\rm Cor}(g) = \sum_k{g_k \otimes g_k}$, the two-point 
correlation $\Cor(u)$ satisfies the expansion $\Cor(u) = \sum_{k}{u}_{k}
\otimes {u}_{k}$ with $u_{k}$ given by
$$ 
\left\{ 
\begin{array}{rll}
-\text{\rm div}(Ae(u_k)) \hspace*{-1.5ex}&= 0 & \text{in } D, \\
u_k \hspace*{-1.5ex}&= 0 & \text{on } \Gamma_D, \\ 
Ae(u_k)n \hspace*{-1.5ex}&= g_k & \text{on } \Gamma_N, \\ 
Ae(u_k)n \hspace*{-1.5ex}&= 0 & \text{on } \Gamma.
\end{array}
\right.
$$
Hence, the mean value ${\mathcal M}(D)$ of the compliance 
is given by
$$ 
{\mathcal M}(D) = \sum_k{\int_D{Ae(u_k):e(u_k)\:dx}},
$$
while its shape derivative reads
$$
\forall \theta \in \Theta_{ad}, \:\: {\mathcal M}^\prime(D)(\theta) 
= -\int_\Gamma{\left(\sum_k{ Ae(u_k):e(u_k)}\right) \:\theta\cdot n\:dx}.
$$
\end{itemize}

\begin{remark}\label{remcalc}
\noindent
\begin{itemize}
\item In the last example, one may be interested in the case 
that random body forces $f(x,\omega)$ are also applied to the 
system (which are, e.g. assumed to be uncorrelated with the surface loads $g$). 
Then, one has to perform two low-rank expansions $\Cor(f) 
= \sum_k { f}_k\otimes { f}_k$ and $\Cor(g) = \sum_l {g}_l\otimes 
{g}_l$ which leads to an expansion of $u$ of the form $\Cor(u) = 
\sum_{k,l}{u}_{k,l}\otimes {u}_{k,l}$ with the $ {u}_{k,l} $ given by
$$ 
\left\{ 
\begin{array}{rll}
-\text{\rm div}(Ae({u}_{k,l})) \hspace*{-1.5ex}&= f_k & \text{in } D, \\
{u}_{k,l} \hspace*{-1.5ex}&= 0 & \text{on } \Gamma_D, \\ 
Ae({u}_{k,l})n \hspace*{-1.5ex}&= g_l & \text{on } \Gamma_N, \\ 
Ae({u}_{k,l})n \hspace*{-1.5ex}&= 0 & \text{on } \Gamma.
\end{array}
\right.
$$
\item 
The above formulae coincide with those for the multiple 
load objective functions (and their derivatives) proposed, 
e.g.\ in \cite{ajmult}. In contrast to this work, here, the 
different load cases are not known a priori, but originate 
from a low-rank approximation of the correlation function 
of the data. 
\item 
Hitherto, we have only been considering low-rank 
approximations of $\text{\rm Cor}(f)$ of the form 
$\text{\rm Cor}(g) \approx \sum_k{g_k \otimes g_k}$,
where the $g_k$ are deterministic data functions, as they 
are naturally produced by the pivoted Cholesky decomposition. 
Notice however that the above discussion straightforwardly 
extends to the case of a low-rank decomposition of the kind
$\text{\rm Cor}(g) \approx \sum_k{ \left( g_k \otimes \widetilde{g}_k 
+ \widetilde{g}_k \otimes g_k \right)}$ (see the example of Section 
\ref{coruncor} for an application of this remark).
\end{itemize}
\end{remark}

\section{Numerical examples}
\label{section:simulations}
We eventually propose two numerical examples which 
illustrate the main features of this article; both of them take 
place in the setting of linear elasticity as considered in 
Section \ref{secelas}.
\subsection{Presentation of the numerical algorithm}\label{secnumalg}~\\

\noindent
When it comes to the numerical implementation of shape 
optimization algorithms, one main difficulty lies in the robust 
representation of shapes and their evolution. To achieve this, we 
rely on the level set method, initially introduced in \cite{SethianOsher}
and brought to the context of shape optimization in \cite{ajt,wang}.

The basic idea is to consider a shape $D \subset \mathbb{R}^d$ 
as the negative subdomain of an auxiliary `level set' function 
$\phi: \mathbb{R}^d \rightarrow \mathbb{R}$, i.e.\
$$ 
\forall x \in \mathbb{R}^d, \:\: \left\{ 
\begin{array}{cl}
\phi(x) < 0 & \mbox{if } x \in D,\\ 
\phi(x) = 0 & \mbox{if } x \in \partial D,\\ 
\phi(x) > 0 & \mbox{if } x \in {^c}\overline{D}.\\ 
\end{array}
\right.
$$
Thus, the motion of a domain $D(t)$, $t \in \left[0,T \right]$, 
induced by a velocity field with normal amplitude $V(t,x)$,
translates in terms of an associated level set function 
$\phi(t,\cdot)$ as a \textit{Hamilton-Jacobi} equation: 
\begin{equation}\label{hj}
\frac{\partial \phi}{\partial t} + V \lvert \nabla \phi \lvert =0 ,\: 
t \in (0,T), \: x \in \mathbb{R}^d.
\end{equation}
Hence, a (difficult) domain evolution problem is replaced 
by a (hopefully easier) PDE problem. Note that, in the 
present situation, $V$ stems from the analytical formula 
for the shape derivative of the considered objective 
function ${\mathcal M}(D)$, which enjoys the structure 
(see Theorem \ref{thelas}):
$$ 
\forall \theta \in \Theta_{ad}, \:\: {\mathcal M}^\prime(D)(\theta) 
= \int_\Gamma{{\mathcal D}_D \: \theta \cdot n \:ds}, 
$$
where ${\mathcal D}_D$ is a scalar function.
  
In numerical practice, the whole space $\mathbb{R}^d$ is 
reduced to a large computational box $D_0$, equipped 
with a fixed triangular mesh ${\mathcal T}$. Each shape 
$D \subset D_0$ is represented by means of a level set 
function $\phi$, discretized at the vertices of ${\mathcal T}$. 
In this context the elastic displacement $u_D$, solution to 
the linear elasticity system (\ref{eq:elasticity}), which is 
involved in the computation of ${\mathcal D}_D$, cannot 
be calculated exactly since no mesh of $D$ is available.
Therefore, we employ the Ersatz material approach \cite{ajt} 
to achieve this calculation approximately: the problem 
(\ref{eq:elasticity}) is transferred to a problem on $D_0$ 
by filling the void part $D_0 \setminus \overline{D}$ with 
a very soft material, whose Hooke's law is $\varepsilon A$
with $\varepsilon \ll 1$.

All our finite element computations are performed within 
the \texttt{FreeFem++} environment \cite{FreeFM}, and 
we rely on algorithms from our previous works \cite{adv,dist}, 
based on the method of characteristics, when it comes to 
\textit{redistancing} level set functions or solving equation (\ref{hj}).

\subsection{Comparison between correlated and 
uncorrelated loads}\label{coruncor}~\\

\noindent
This first example is aimed at appraising the influence of 
correlation between different sets of loads applied to the 
shapes. Let us consider the situation depicted on Figure 
\ref{figbridge1} (left): a bridge is clamped on its lower part, 
and two sets of loads $g_a = (1,-1)$ and $g_b = (-1,1)$ are 
applied on its superior part, which may or may not be correlated. 
The actual loads are then modelled as a random field $g(x,\omega)$ 
of the form
$$ 
g(x,\omega) = \xi_1(\omega)g_a (x)  +  \xi_2(\omega)g_b (x),
$$
where the random variables $\xi_1$ and $\xi_2$ are 
normalized so that
$$ 
\int_\Omega{\xi_i \: \mathbb{P}(d\omega)} = 0, 
\quad \int_\Omega{\xi_i^2\: \mathbb{P}(d\omega)} = 1, \quad i =1,2.
$$
The degree of correlation between $g_a$ and $g_b$ is measured 
by $\alpha := \int_\Omega{\xi_1 \xi_2 \:\mathbb{P}(d\omega)}$ where 
the case $\alpha = 0$ corresponds to uncorrelated loads. In this context, 
the correlation function $\text{\rm Cor}(g)\in [L^2(\Gamma_N)]^d \times 
[L^2(\Gamma_N)]^d$ naturally arises as a finite sum of pure tensor 
products (so that no low-rank approximation is necessary), and reads
$$ 
\text{\rm Cor}(g) = g_a \otimes g_a + g_b \otimes g_b 
+ \alpha \left(g_a \otimes g_b + g_b \otimes g_a \right).
$$ 
The mean value ${\mathcal M}(D)$ of the compliance 
of shapes and its derivative can be calculated explicitly, 
along the lines of Section \ref{lrsf} (see also Remark \ref{remcalc}). 

We run several examples, associated to different values 
of the degree of correlation $\lvert \alpha \lvert \leq 1$. In 
each situation, an equality constraint $\text{\rm Vol}(D) =
\int_D{dx} = 0.35$ on the volume of shapes is enforced 
owing to a standard Augmented Lagrangian procedure 
(see \cite{nocedal}, \S 17.4). Starting from the initial 
shape of Figure \ref{figbridge1} (right), $250$ iterations 
of the algorithm outlined in Section \ref{secnumalg} are 
performed. The mesh ${\mathcal T}$ of the computational 
domain is composed of $12\,141$ vertices and $23\,800$ 
triangles; the CPU time for each example is approximately 
$12$ min on a MacBook Air with a 1.8 GHz Intel Core i5 
with 4 GB of RAM. The resulting optimal shapes are 
represented in Figure \ref{figoptbridge1}, and the evolution 
of the objective function ${\mathcal M}(D)$ and the volume 
$\text{\rm Vol}(D)$ can be appraised on the histories of 
Figure \ref{figcvbridge1}. 

A huge difference in trends can be observed, depending 
on the degree of correlation between the loads (observe also 
the values of the objective function ${\mathcal M}(D)$ in Figure 
\ref{figcvbridge1}). Roughly speaking, as $\alpha$ gets closer and 
closer to $-1$, the shapes have to withstand the `worst-case' of 
the situations entailed by $g_a$ and $g_b$.

\begin{figure}[!ht]
\centering
\includegraphics[width=0.28 \textwidth]{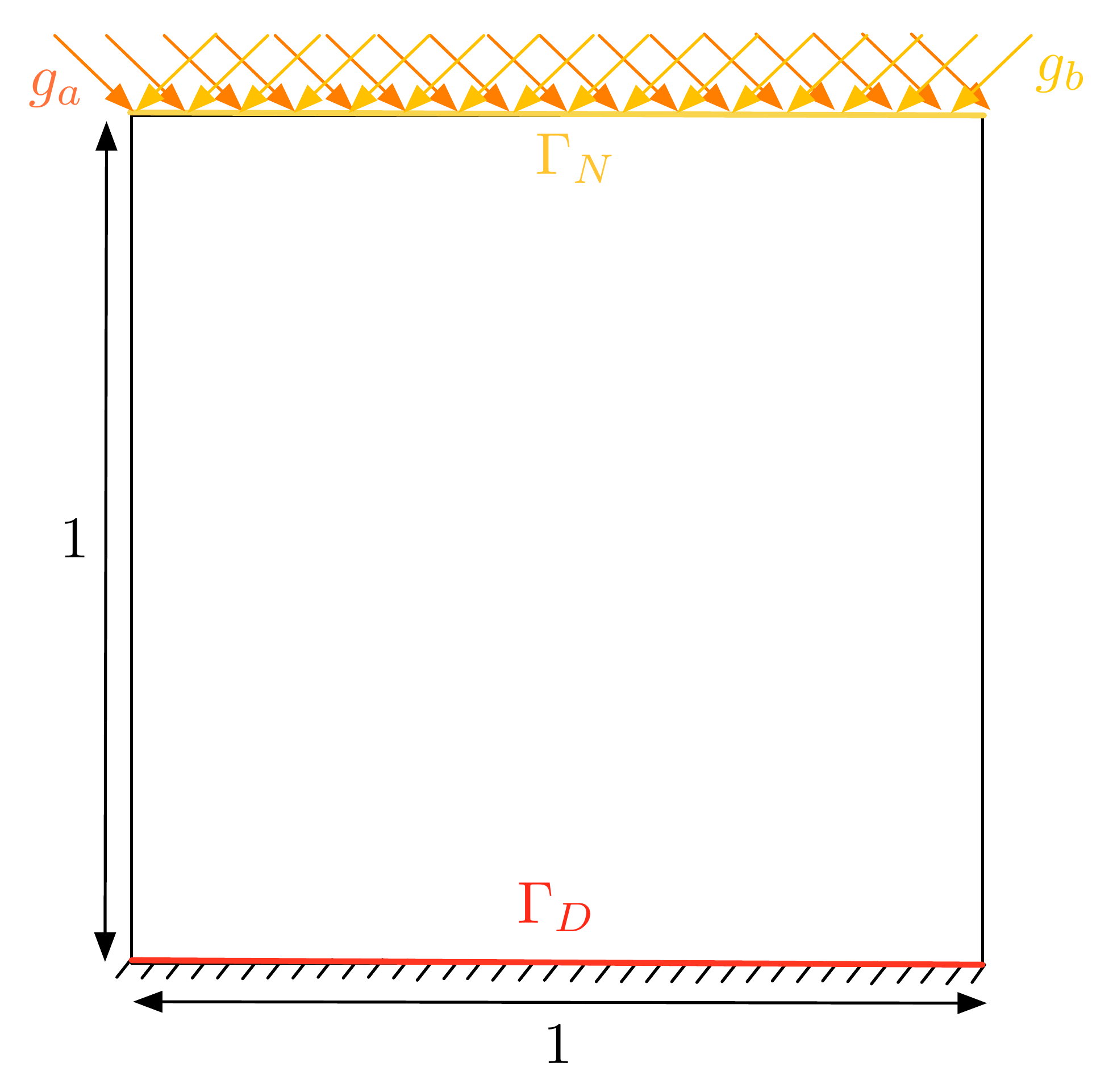} \quad 
\includegraphics[width=0.28 \textwidth]{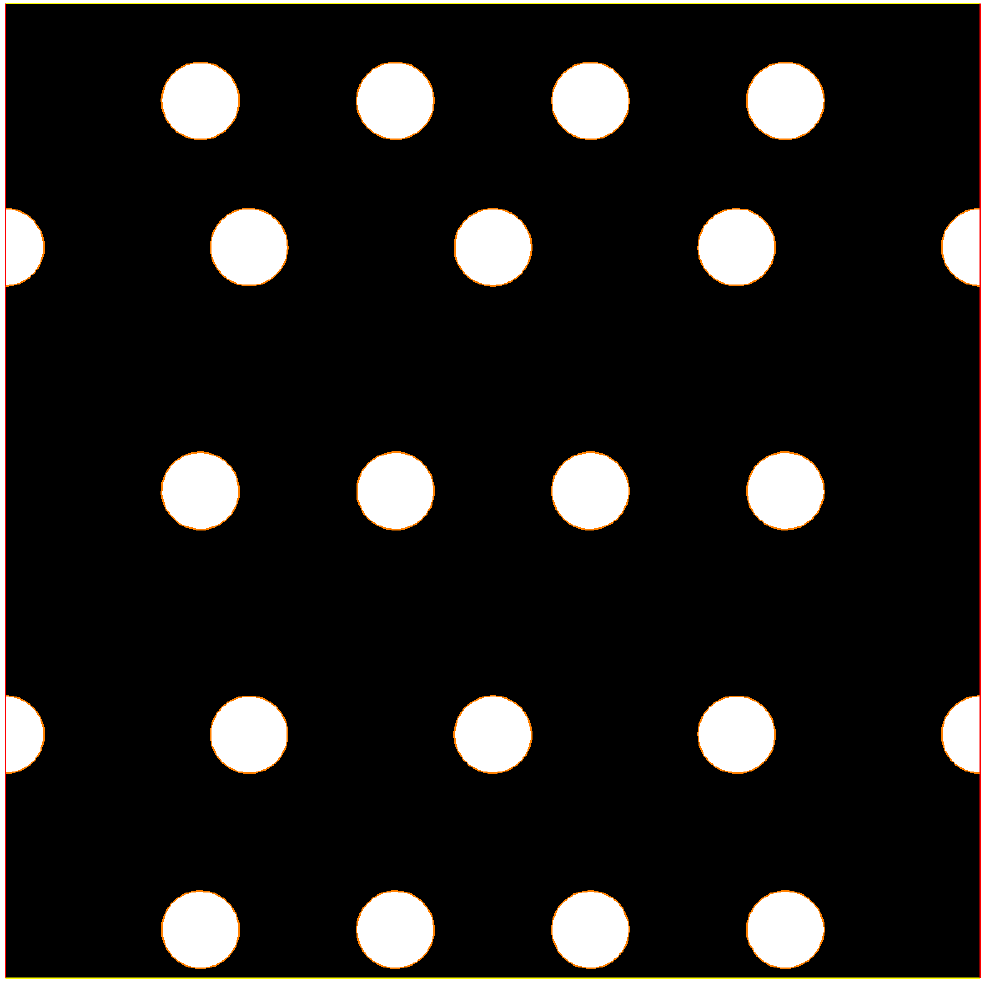}
\caption{Details of the test case of Section \ref{coruncor} (left) 
and initial shape (right).}
\label{figbridge1}
\end{figure}

\begin{figure}[!ht]
\centering
\begin{tabular}{ccc}
\includegraphics[width=0.31 \textwidth]{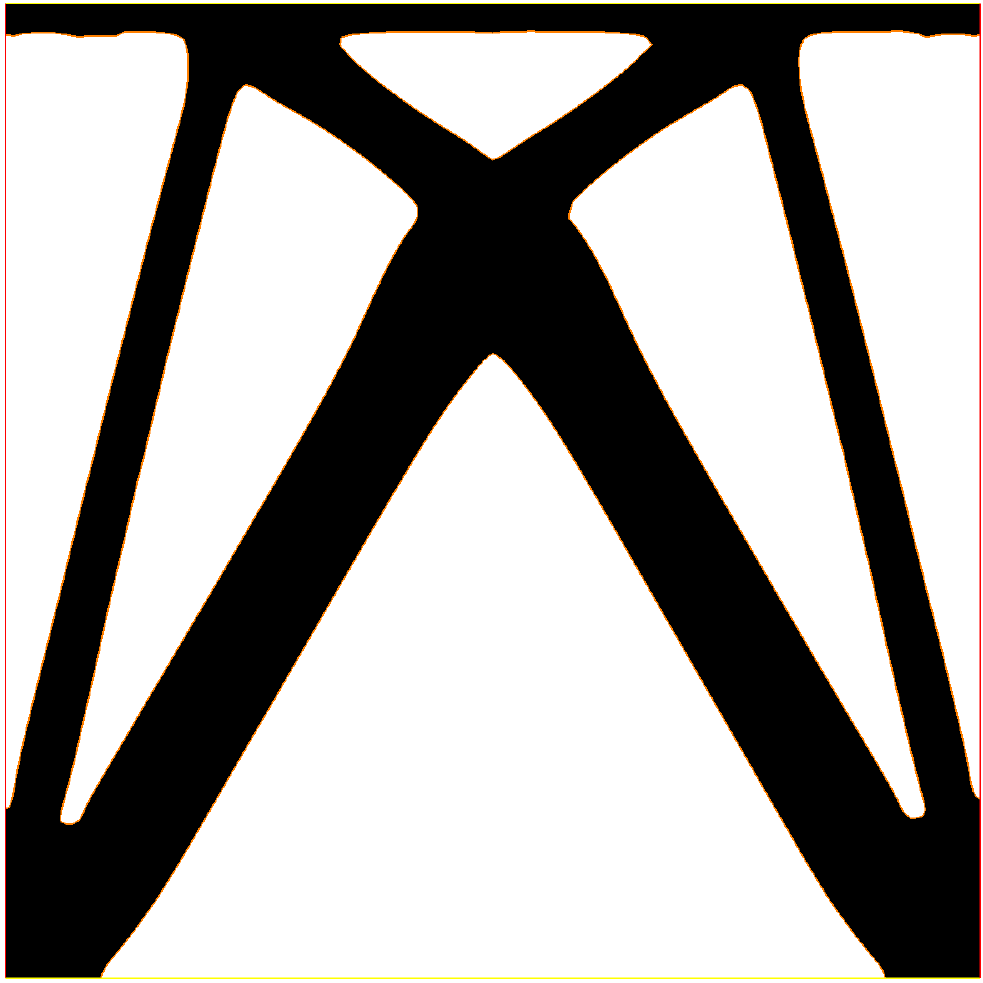} 
& \includegraphics[width=0.31 \textwidth]{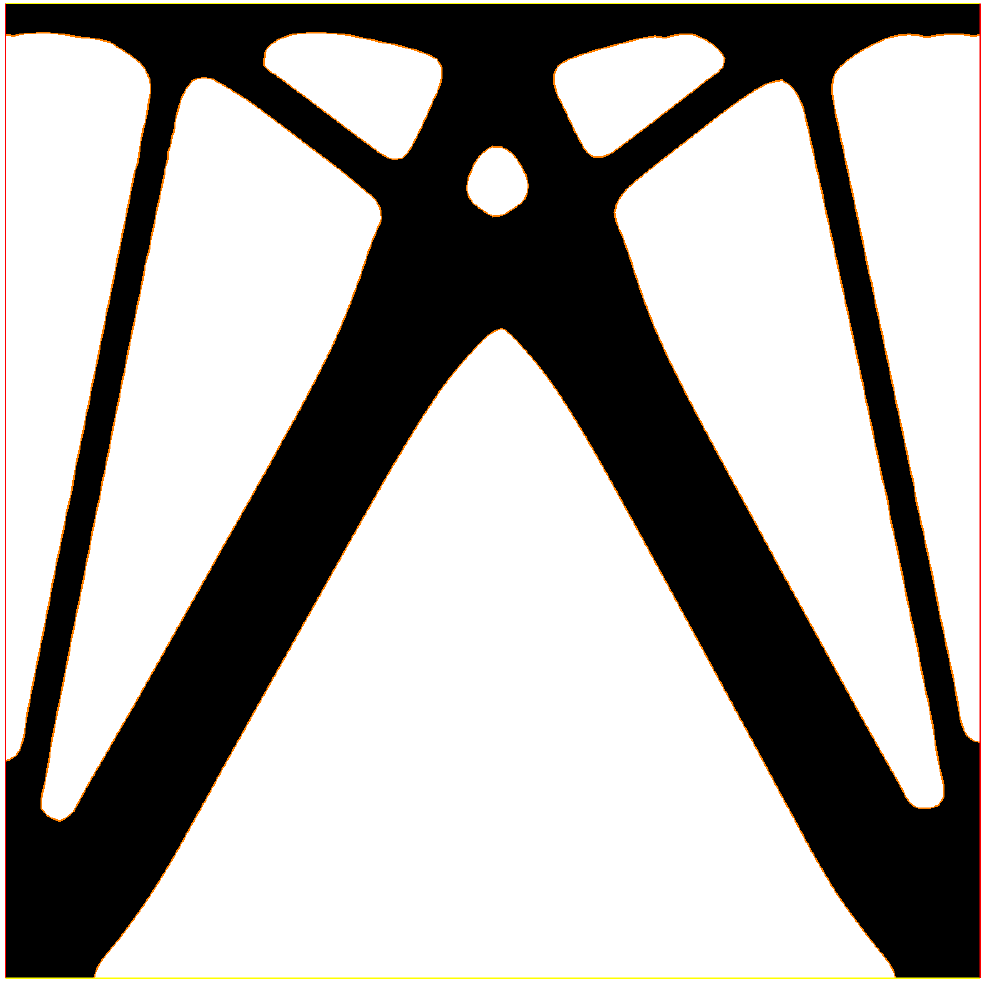}
&  \includegraphics[width=0.31 \textwidth]{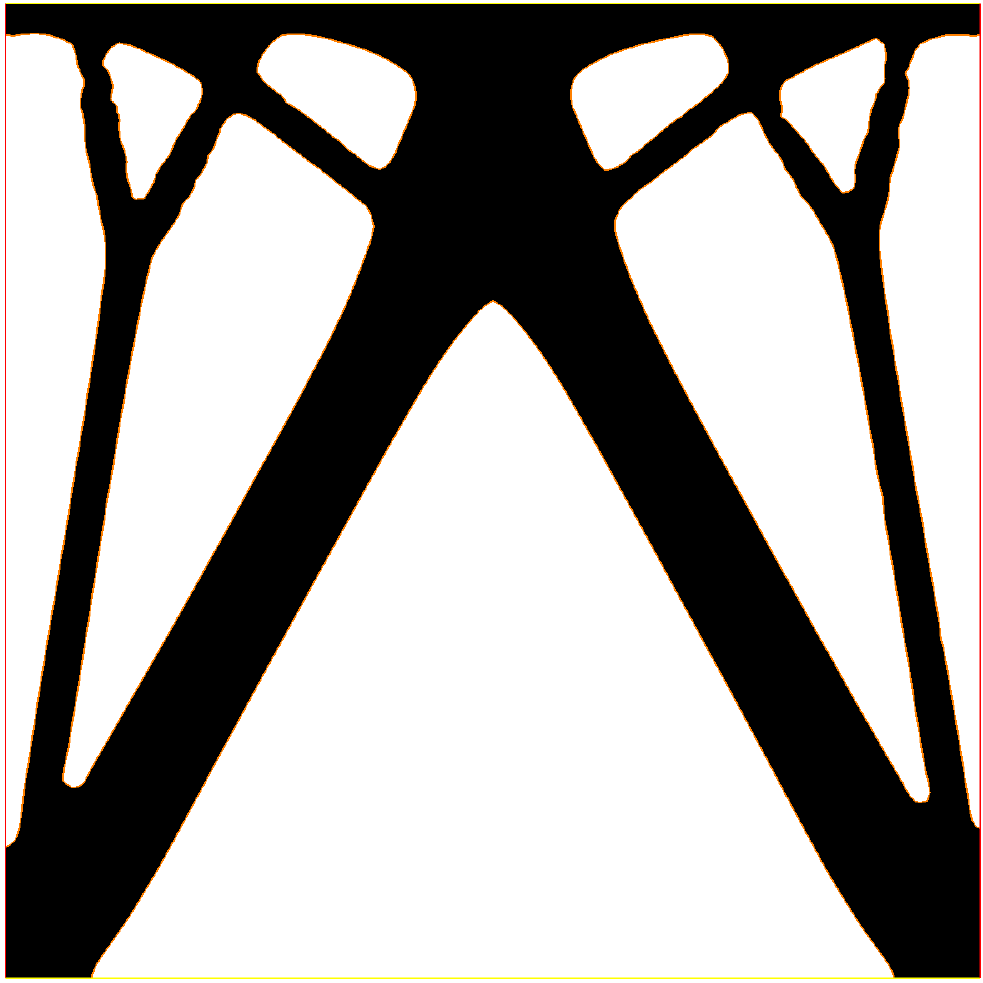} \\
\includegraphics[width=0.31 \textwidth]{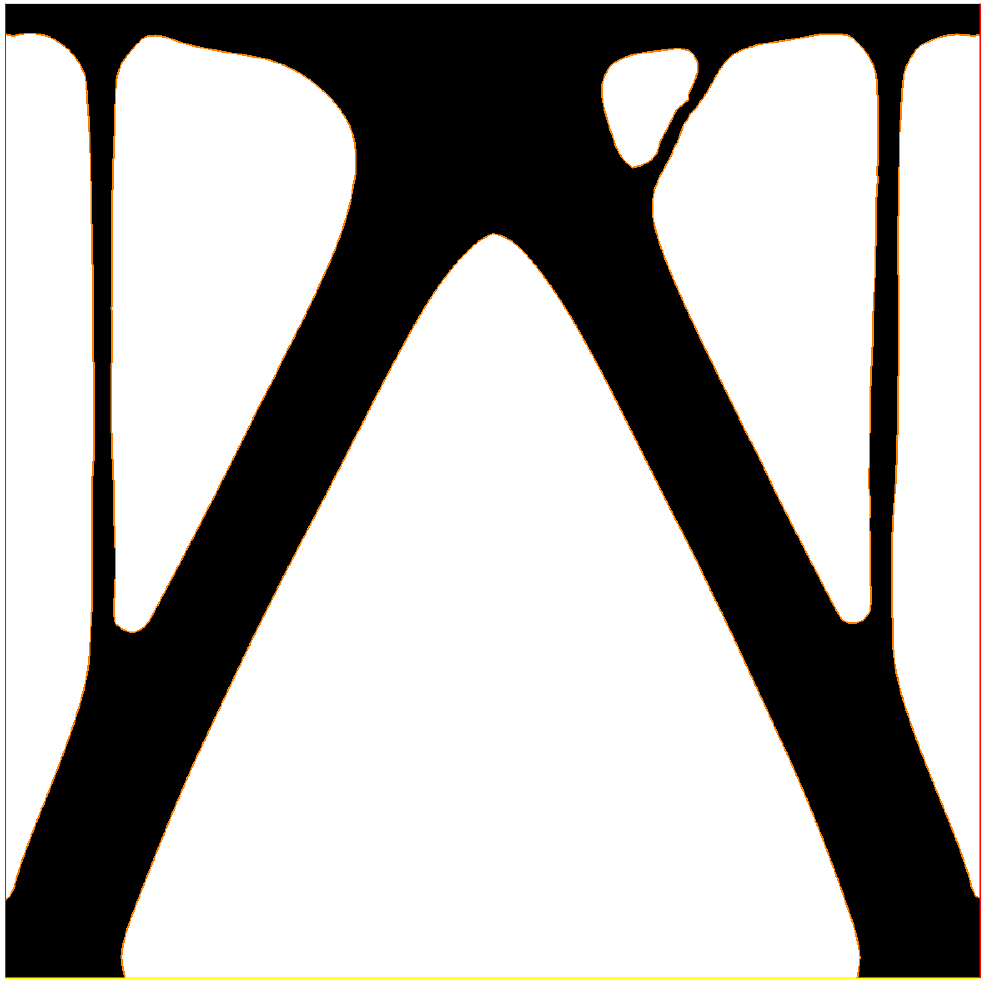} 
& \includegraphics[width=0.31 \textwidth]{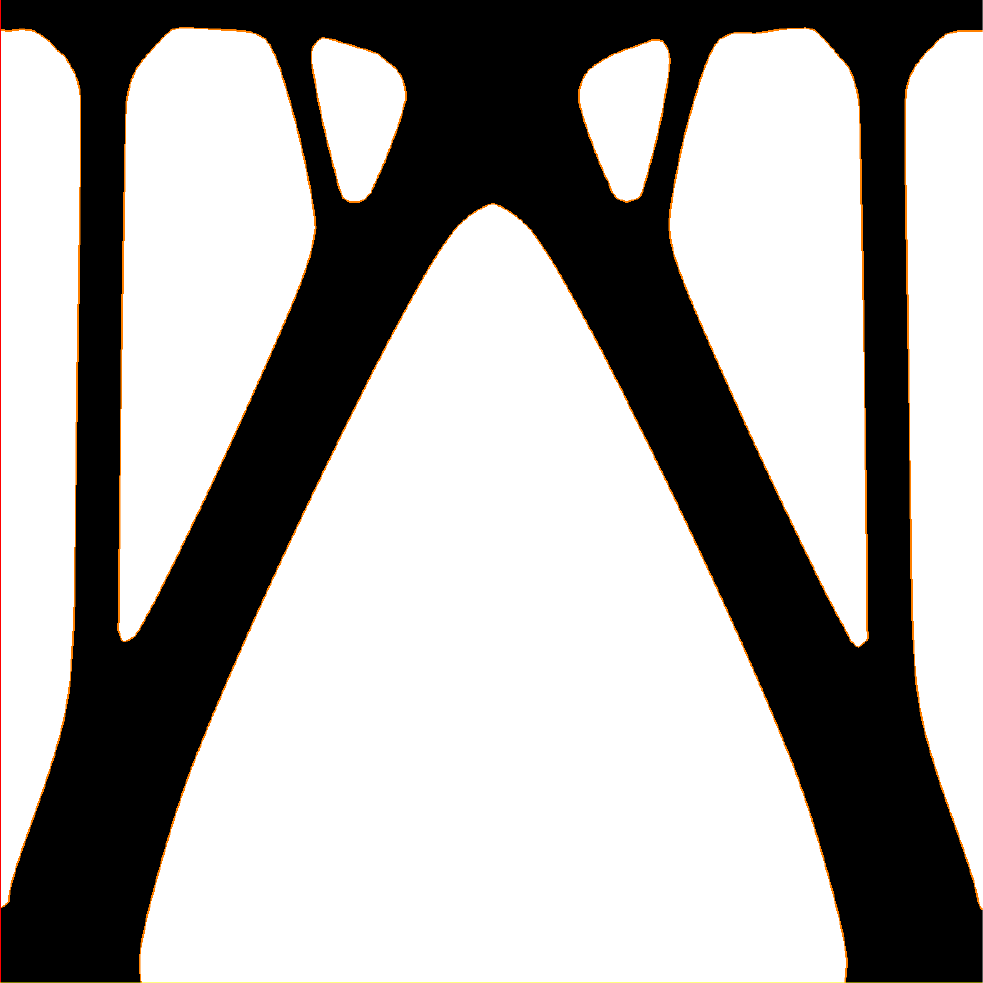} 
& \includegraphics[width=0.31 \textwidth]{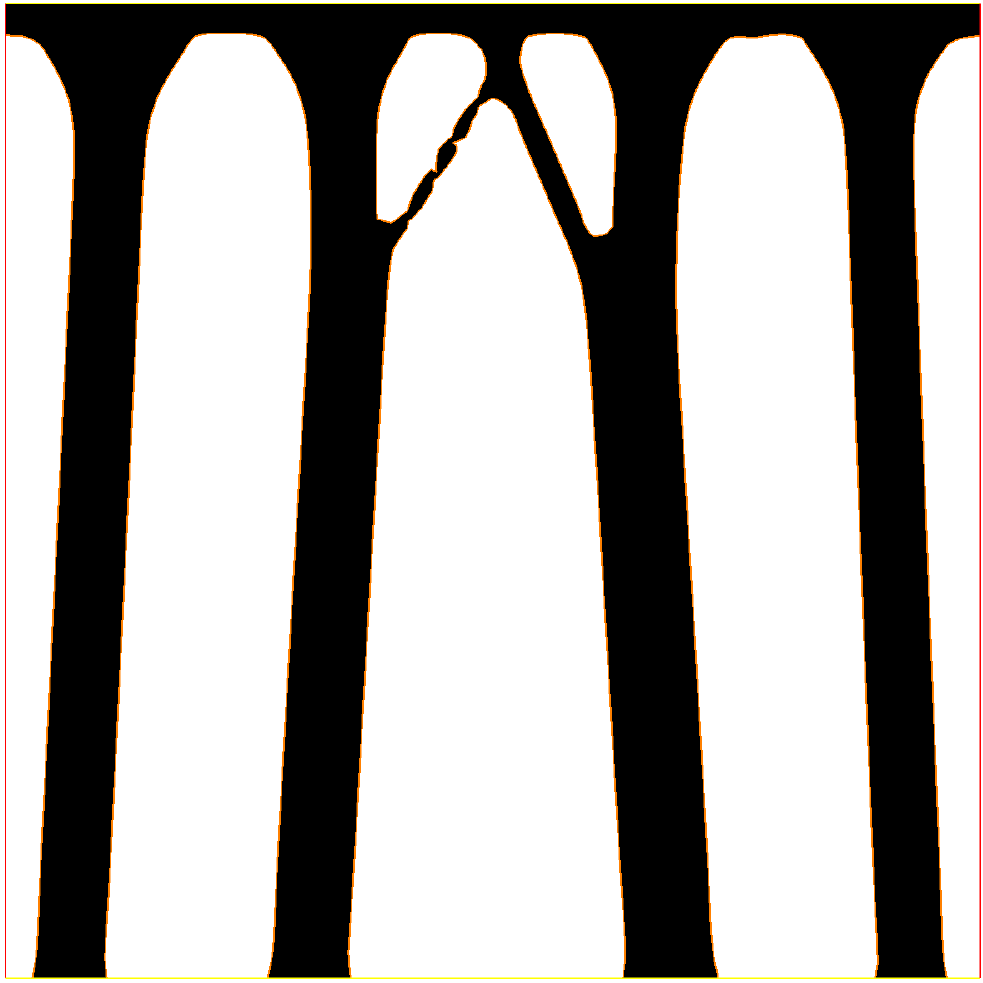}
\end{tabular}
\caption{Optimal shapes obtained in the test case of Section 
\ref{coruncor}, associated to degrees of correlation $\alpha = 
-1,-0.7,0,0.5,0.8,1$ (from left to right, top to bottom).}
\label{figoptbridge1}
\end{figure}

\begin{figure}[!ht]
\centering
\includegraphics[width=0.4 \textwidth]{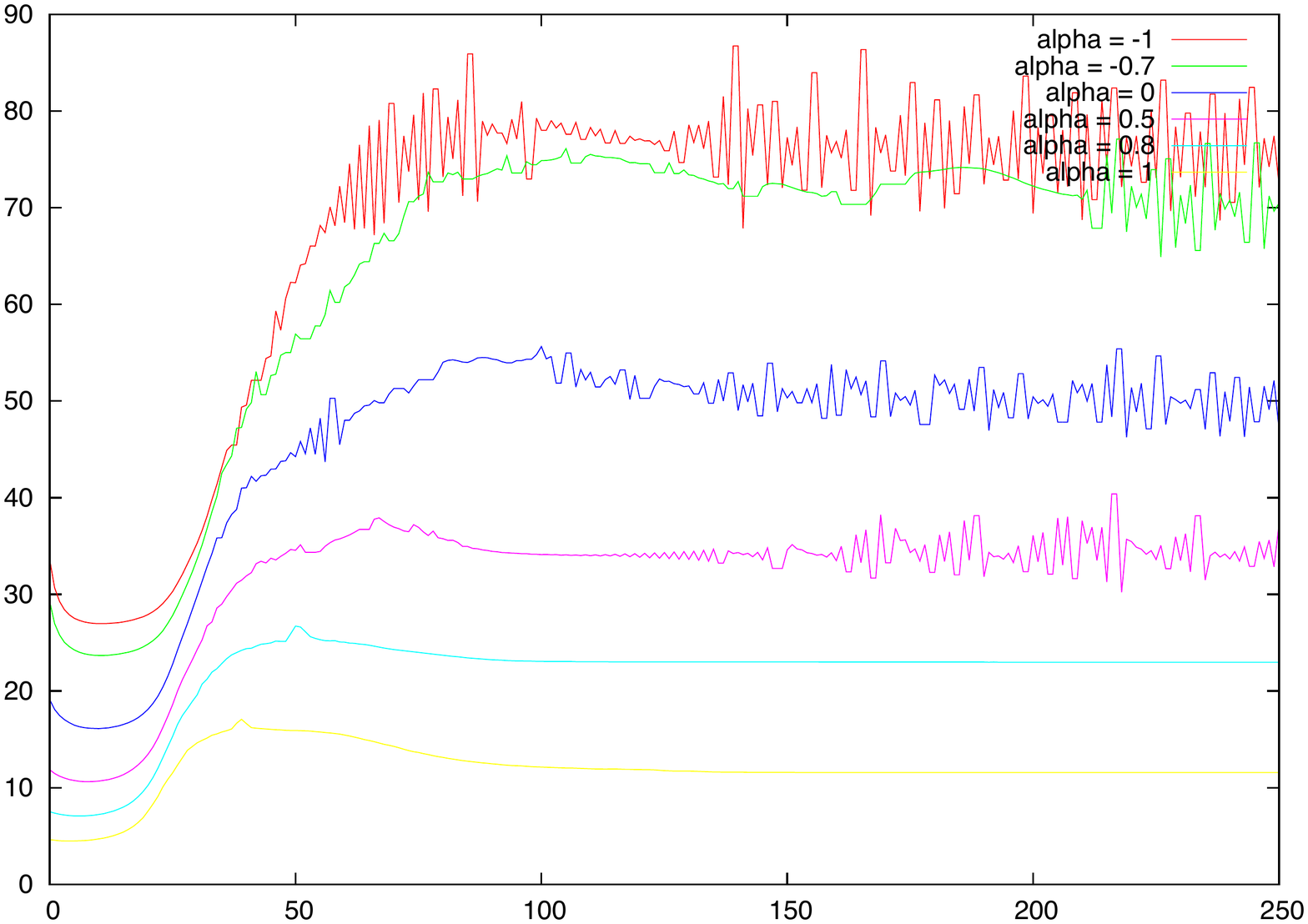} \:\: 
\includegraphics[width=0.4 \textwidth]{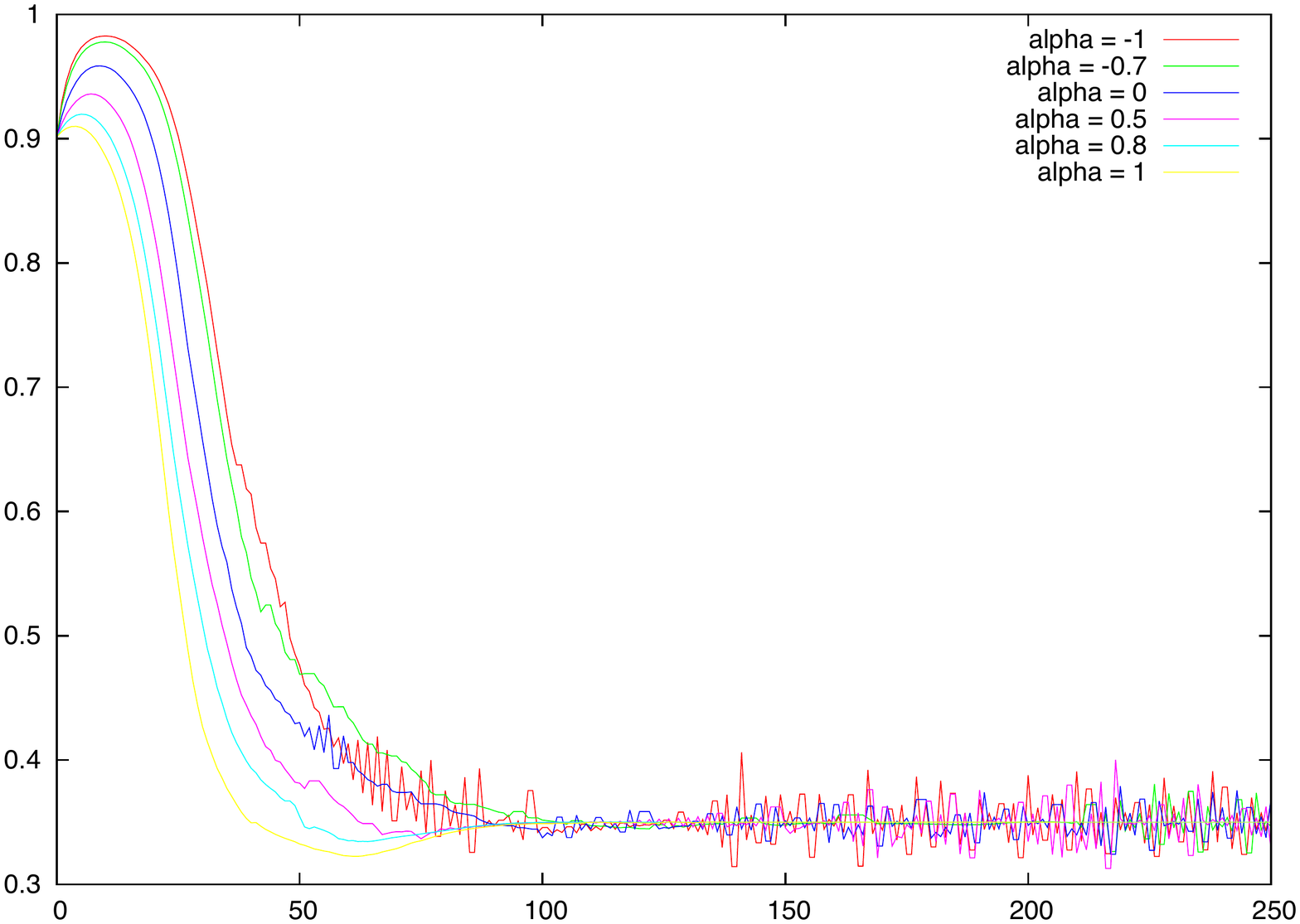} 
\caption{Convergence histories for the mean value (left) and the 
volume (right) in the test case of Section \ref{coruncor}.}
\label{figcvbridge1}
\end{figure}\par

\subsection{An example with a more complex 
correlation function}\label{seccorrelnt}~\\

\noindent
Let us turn to an example where the correlation function of the 
data is no longer trivial (i.e., it cannot be written as a finite sum 
of pure tensor products). The situation at stake is depicted on 
Figure \ref{figbrini}: a bridge is clamped on its lower part and 
(random) surface loads $g = (g_1,g_2)$ are applied on its top.

We study three different scenarii, corresponding to surface 
loads $g^i = (g_1^i,g_2^i)$, $i=1,2,3$. For the sake of simplicity, 
in all three cases, the horizontal and vertical components $g_1^i$ 
and $g_2^i$ are uncorrelated; the associated correlation functions 
are given by
$$
\forall x,y \in \Gamma_N,\:\:\left\{\begin{array}{l}
\displaystyle{\text{\rm Cor}(g_1^i)(x,y) = 10^5  
\: h_i^+\left(\frac{x_2+y_2}{2}\right) e^{-\frac{\lvert x_1 - y_1 \lvert}{l}}}, \\[2ex]
\displaystyle{\text{\rm Cor}(g_2^i)(x,y) = 10^6 \: k_i^+
\left(\frac{x_2+y_2}{2}\right) e^{-\frac{\lvert x_1 - y_1 \lvert}{l}}},\end{array}\right.
$$
where the superscript $^+$ stands for the positive part, the characteristic 
length $l$ is taken as $l=0.1$, and the functions $h_i$ and $k_i$ ($i=1,2,3$) 
are defined as (see also the graphs in Figure \ref{figgraphs})
\begin{alignat*}{3} 
h_1(t) &= 1- 4\left(t-\frac{1}{2}\right)^2 , \quad && k_1(t) =
\begin{cases}
16\left(t-\frac{1}{4}\right)^2, &\mbox{if } t \leq \frac{1}{2}, \\
16\left(t-\frac{3}{4}\right)^2,& \mbox{else},
\end{cases}\\
h_2(t) &= 2t(1-t) + \frac{1}{2} , \quad && k_2(t) = 
\begin{cases}
24\left(t-\frac{1}{4}\right)\left(t-\frac{1}{3}\right), & \mbox{if } t \leq \frac{1}{2}, \\
24\left(t-\frac{3}{4}\right)\left(t-\frac{2}{3}\right), & \mbox{else},
\end{cases}\\
h_3(t) &= 1, \quad && k_3(t) = 
\begin{cases}
24\left(t-\frac{1}{4}\right)\left(t-\frac{1}{6}\right), & \mbox{if } t \leq \frac{1}{2}, \\
24\left(t-\frac{3}{4}\right)\left(t-\frac{5}{6}\right), & \mbox{else}.
\end{cases}
\end{alignat*}
Loosely speaking, these correlation functions show a decreasing 
dependence on the distance $\lvert x- y \lvert$ between two points 
$x, y \in \Gamma_N$, and the factors $h_i, k_i$, which depend 
only on the average position $(x+y)/2$, mimic a variable intensity 
of the loads according to the spatial location. Note that the pivoted 
Cholesky decomposition, as described in Section \ref{secnumreal}, 
is used to obtain low-rank approximations of these correlation functions 
of the form (\ref{eq:low-rank}), and we retain the first five terms 
in each expansion.

 \begin{figure}[!ht]
\centering
\includegraphics[width=0.4 \textwidth]{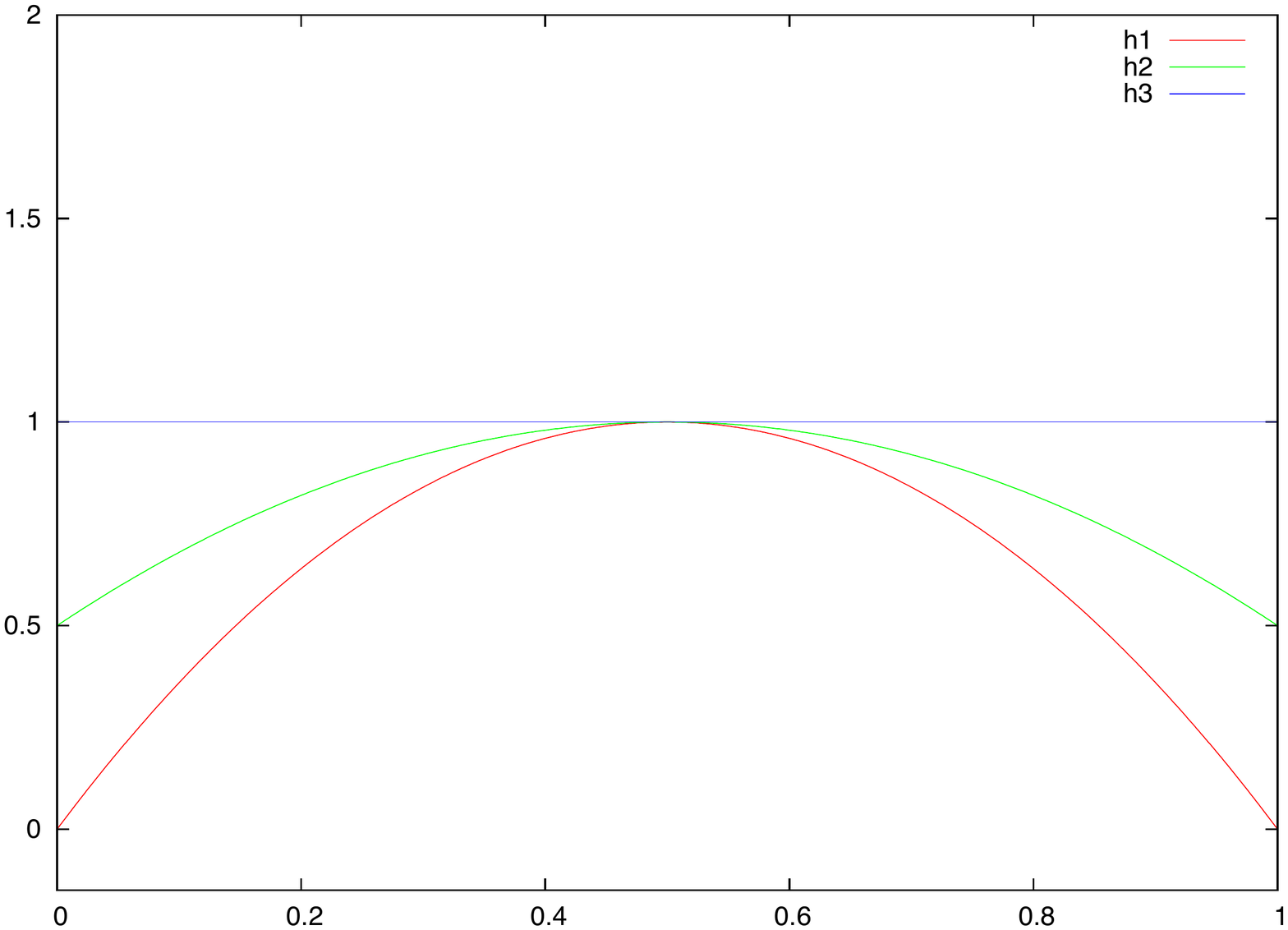} \:\:  
\includegraphics[width=0.4 \textwidth]{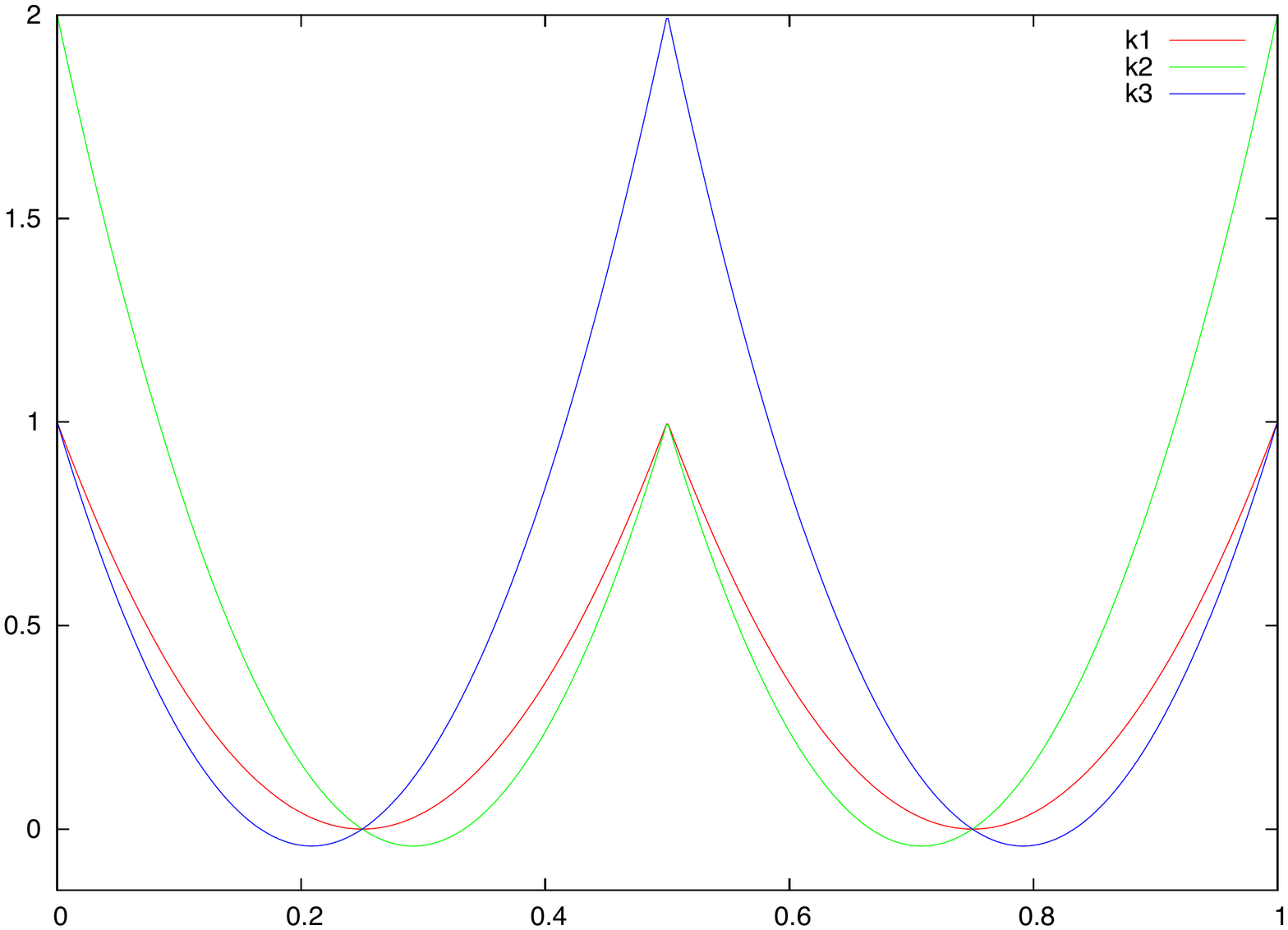} 
\caption{Graphs of the functions $h_i$ (left) and 
graphs of the functions $k_i$ (right).}
\label{figgraphs}
\end{figure}\par

The objective function of interest is, again, the mean value 
${\mathcal M}(D)$ of the compliance of the structure. A 
constraint $\text{\rm Vol}(D) = 0.75$ is imposed on the 
volume of shapes and $250$ iterations of the algorithm 
outlined in Section \ref{secnumalg} are performed in each 
situation on a computational mesh composed of $5\,752$ 
vertices and $11\,202$ triangles which requires a CPU time 
of approximately $15$ min. The resulting optimal shapes and 
convergence histories are reported on Figures \ref{figbrres} 
and \ref{figbrgraph}, respectively, showing very different 
trends depending on the particular situation.

\begin{figure}[!ht]
\centering
\includegraphics[width=0.28 \textwidth]{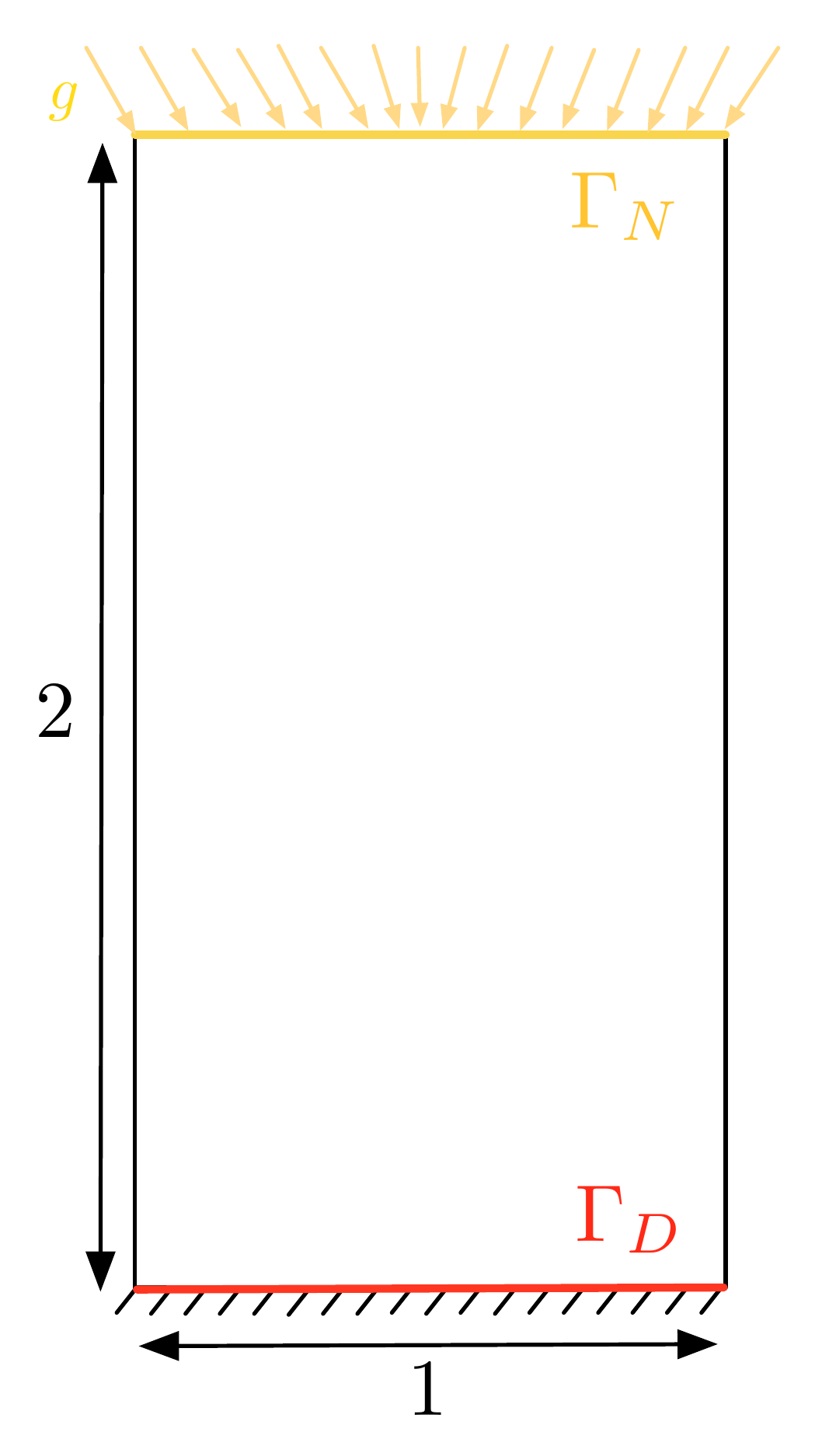} \quad 
\includegraphics[width=0.25 \textwidth]{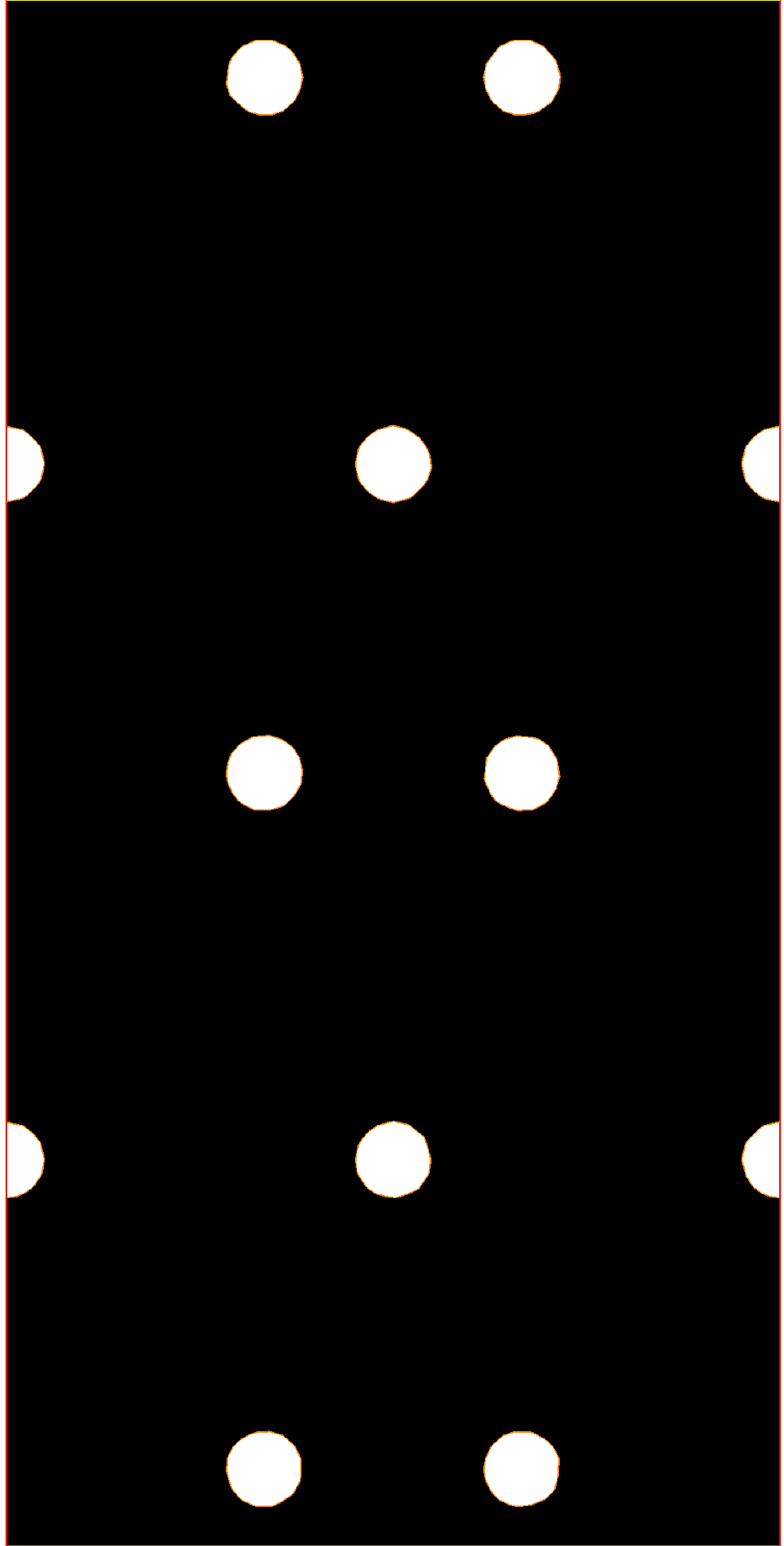}
\caption{The setup of the test case of Section 
\ref{seccorrelnt} (left) and initial shape (right).}
\label{figbrini}
\end{figure}

\begin{figure}[!ht]
\centering
\includegraphics[width=0.265 \textwidth]{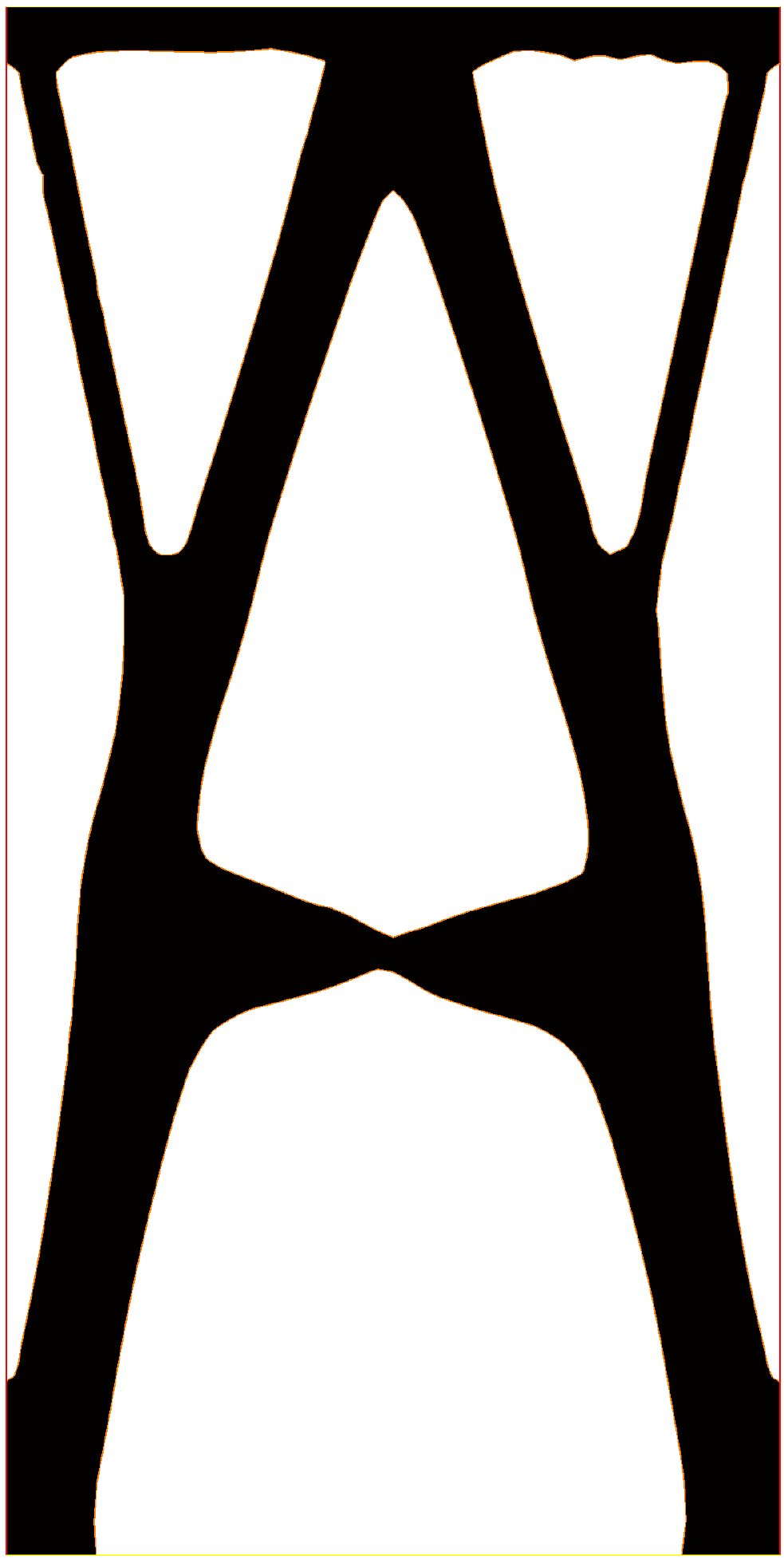} \quad 
\includegraphics[width=0.27 \textwidth]{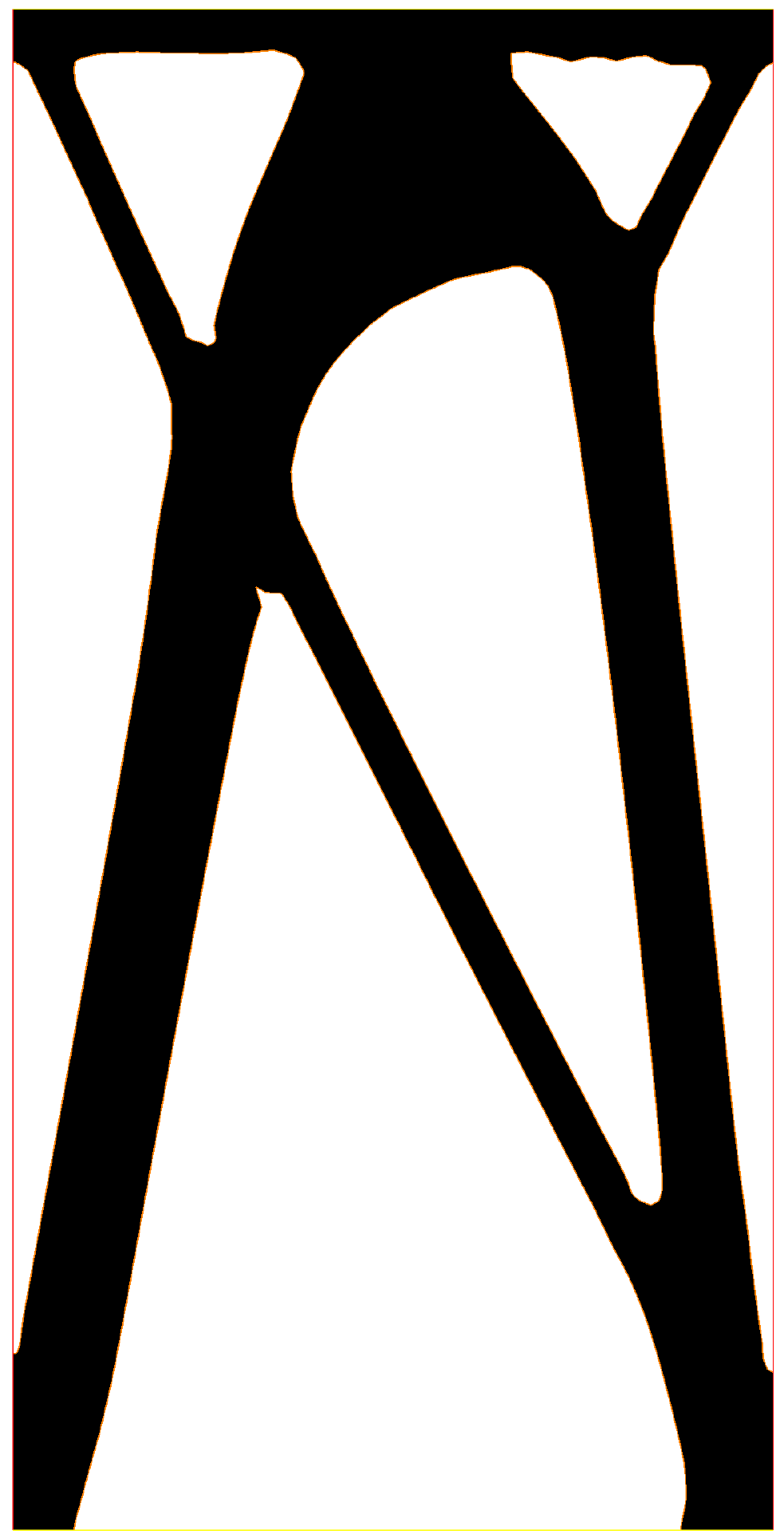} \quad 
\includegraphics[width=0.27 \textwidth]{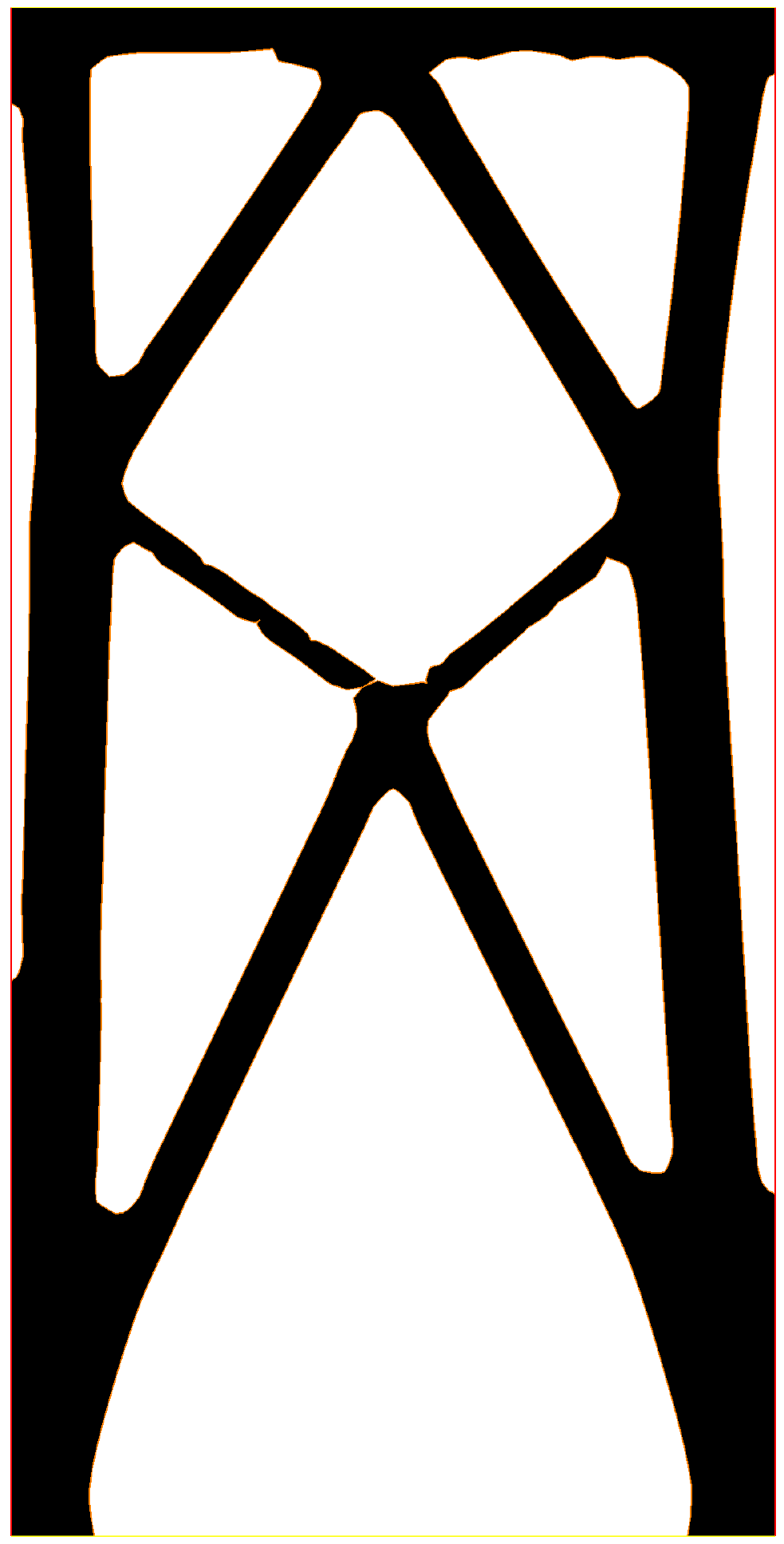}
\caption{Optimal shapes obtained in the situations 
$1,2,3$ in the test case of Section \ref{seccorrelnt}
(from left to right).}
\label{figbrres}
\end{figure}

\begin{figure}[!ht]
\centering
\includegraphics[width=0.41 \textwidth]{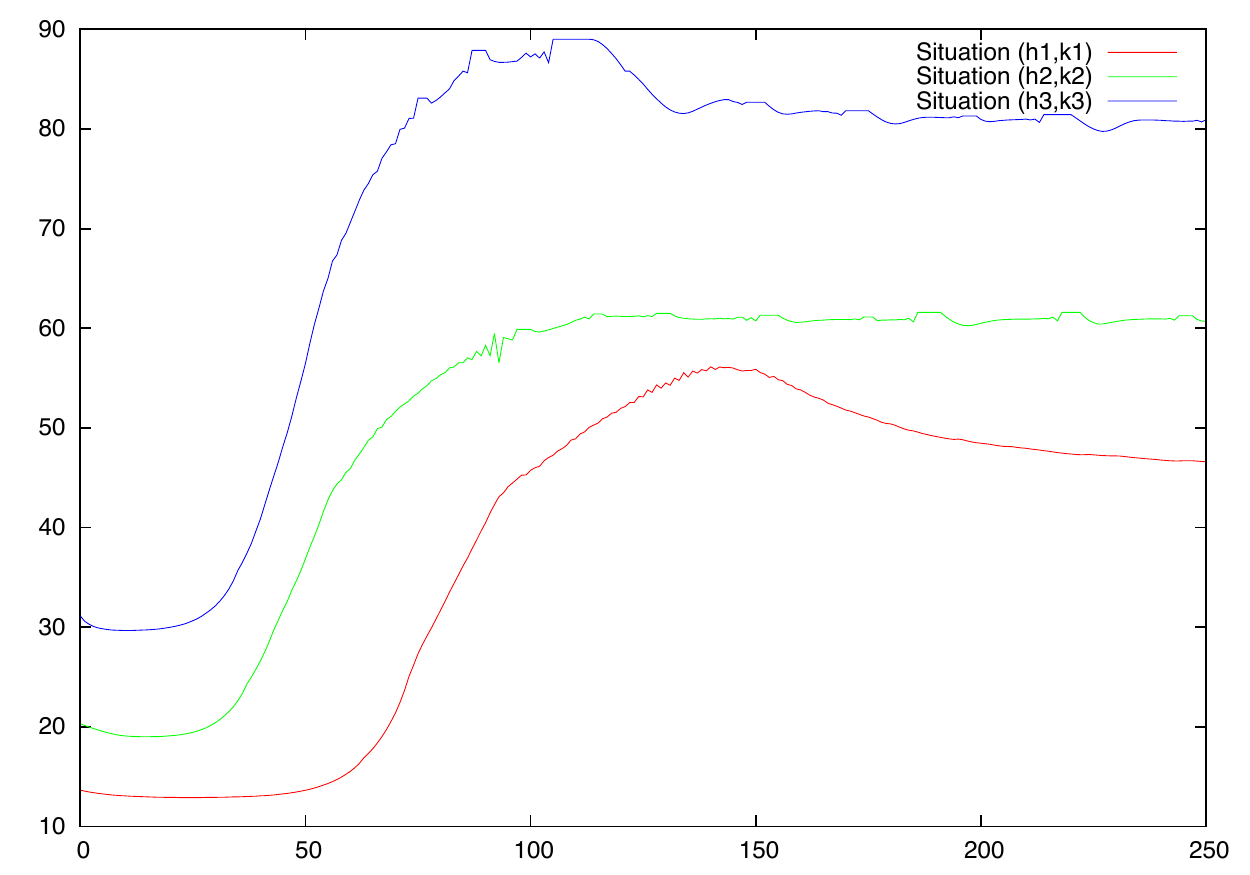} \:\:  
\includegraphics[width=0.41 \textwidth]{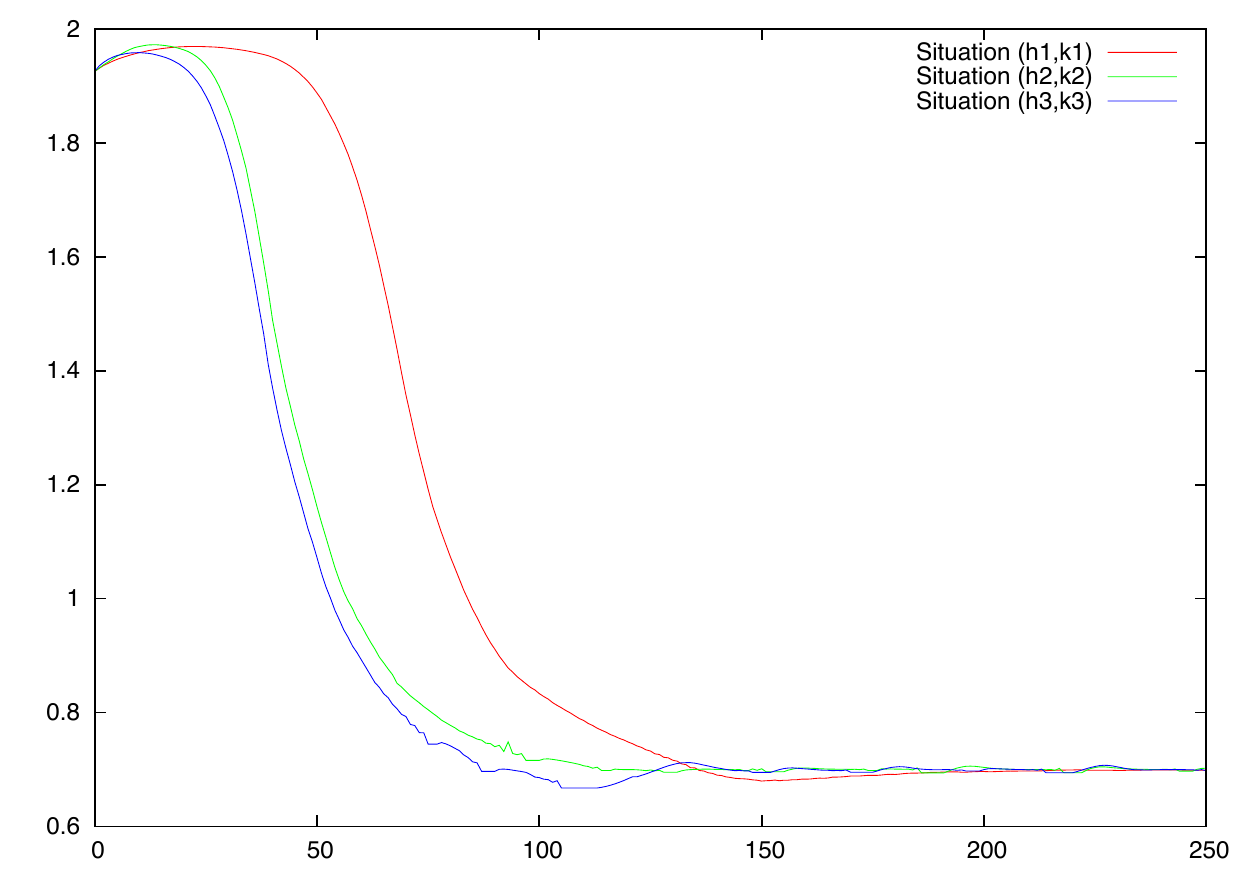} 
\caption{Convergence histories for the mean value (left) and 
the volume (right) in the test case of Section \ref{seccorrelnt}.}
\label{figbrgraph}
\end{figure}\par

\appendix
\bibliographystyle{plain}

\end{document}